\title{On Adically Complete D-Modules in Characteristic Zero}
\author{Amnon Yekutieli}
\date{13 July 2024}
\address{Department of  Mathematics,
Ben Gurion University, Be'er Sheva 84105, Israel.}
\email{\href{mailto:amyekut@gmail.com}{amyekut@gmail.com}}
\urladdr{\url{https://sites.google.com/view/amyekut-math/home}}
\newtheorem{thm}[equation]{Theorem}
\newtheorem{cor}[equation]{Corollary}
\newtheorem{prop}[equation]{Proposition}
\newtheorem{lem}[equation]{Lemma}
\theoremstyle{definition}
\newtheorem{dfn}[equation]{Definition}
\newtheorem{rem}[equation]{Remark}
\newtheorem{setup}[equation]{Setup}
\numberwithin{equation}{section}
\newcommand{\iso}{\xrightarrow{%
\smash{\raisebox{-0.5ex}{\ensuremath{\scriptstyle \simeq  \mspace{2mu}}}}}}
\newcommand{\inj}{\rightarrowtail} 
\newcommand{\surj}{\twoheadrightarrow}
\newcommand{\sub}{\subseteq}
\newcommand{\opn}{\operatorname}
\newcommand{\cat}[1]{\operatorname{\mathsf{#1}}}
\newcommand{\cd}{\mspace{0.8mu}{\cdotB}\mspace{0.8mu}}
\newcommand{\mfrak}[1]{\mathfrak{#1}}
\newcommand{\mcal}[1]{\mathcal{#1}}
\newcommand{\mrm}[1]{\mathrm{#1}}
\newcommand{\OO}{\mcal{O}}
\newcommand{\MM}{\mcal{M}}
\newcommand{\PP}{\mcal{P}}
\newcommand{\DD}{\mcal{D}}
\newcommand{\La}{\Lambda}
\newcommand{\si}{\sigma}
\newcommand{\la}{\lambda}
\renewcommand{\th}{\theta}
\newcommand{\al}{\alpha}
\newcommand{\ga}{\gamma}
\newcommand{\ep}{\epsilon}
\newcommand{\Om}{\Omega}
\renewcommand{\a}{\mfrak{a}}
\renewcommand{\b}{\mfrak{b}}
\newcommand{\bs}[1]{\boldsymbol{#1}}
\newcommand{\K}{\mathbb{K}}
\newcommand{\Q}{\mathbb{Q}}
\newcommand{\Z}{\mathbb{Z}}
\newcommand{\N}{\mathbb{N}}
\newcommand{\C}{\mathbb{C}}
\newcommand{\smfrac}[2]{\scalebox{1.1}%
{$\genfrac{}{}{0.45pt}{1}{#1}{#2 {}^{\vphantom{X}}}$}}
\newcommand{\tup}[1]{\textup{#1}}
\newcommand{\boplus}{\bigoplus\nolimits}
\newcommand{\ot}{\otimes}
\newcommand{\what}[1]{\widehat{#1}}  
\newcommand{\wh}[1]{\widehat{#1}}
\newcommand{\lto}{\leftarrow}
\renewcommand{\d}{\mathrm{d}}
\newcommand{\pa}{\partial}
\newcommand{\lb}{\linebreak}
\newcommand{\abs}[1]{\lvert #1 \rvert}
\newcommand{\sbmat}[1]{\left[ \begin{smallmatrix} #1%
\end{smallmatrix} \right]}
\newcommand{\lsp}{\mspace{1.0mu}}
\newcommand{\msp}{\mspace{2.5mu}}
\begin{document}

\begin{abstract}
Let $(X, \OO_X)$ be an algebraic manifold in characteristic $0$, or an
analytic manifold over $\C$.
A standard theorem says that a left $\DD_X$-module $\MM$, which is coherent as
an $\OO_X$-module, is locally free.
This theorem has a generalization to the adically complete algebraic setting,
in a paper by Ogus from 1973.

In the present paper we take a new look at the work of Ogus. We provide a
detailed proof of the theorem on $\DD$-modules, and extend it to the
non-noetherian setting. We also give another proof of an interesting
result of Ogus about adically complete modules
(slightly extended).

In the Appendix we discuss a related error in a book by Bj\"{o}rk.
\end{abstract}

\maketitle

\tableofcontents

\setcounter{section}{-1}
\section{Introduction}

Let $(X, \OO_X)$ be an algebraic manifold over a field $\K$ of characteristic
$0$, or an analytic manifold over $\C$. A standard theorem says that a left
$\DD_X$-module $\MM$, which is coherent as an $\OO_X$-module, is a locally free
$\OO_X$-module. See \cite{Be}, \cite{Bj} or \cite{HTT}.

This geometric theorem has a vast generalization to the adically complete
algebraic setting, namely Theorem 1.3 in the paper \cite{Og} by A. Ogus
from 1973. In Remark \ref{rem:305} we explain how the algebraic theorem of Ogus
implies the geometric theorem.

In our present paper we give a detailed proof of the theorem of Ogus,
and extend it to the non-noetherian setting. This is our Theorem \ref{thm:411}.
We also provide another proof of an interesting result of
Ogus about adically complete modules (see \cite[Proposition A1]{Og}),
and extend it slightly; this is our Theorem \ref{thm:410} below.
Thus our paper is not original research, but rather an attempt to throw new
light on these remarkable results.

Before stating our first theorem, we need a bit of background.
Consider a commutative ring $A$ and a finitely generated ideal $\a \sub A$.
Let $M$ be an $A$-module. We do not assume that $A$ is noetherian, nor that $M$
is finitely generated.
The module $M$ is equipped with its $\a$-adic
filtration $\{ \a^i \cd M \}_{i \in \N}$.
The {\em $\a$-adic completion} of $M$ is the $A$-module
$\La_{\a}(M) := \lim_{\lto i} \msp M / \a^i \cd M$, and $M$ is called
{\em $\a$-adically complete} (and separated) if the canonical homomorphism
$\tau_{M, \a} : M \to \La_{\a}(M)$ is an isomorphism.
If $\tau_{M, \a}$ is only injective, then $M$ is called {\em $\a$-adically
separated}.

Suppose the $A$-module $M$ has some other descending filtration
$\{ F^i(M) \}_{i \in I}$ by $A$-sub\-modules, indexed by a directed set
$(I, \leq)$. Namely, $\{ F^i(M) \}_{i \in I}$ is an inverse system of
submodules of $M$. The {\em $F$-adic completion} of $M$ is the $A$-module
$\La_{F}(M) := \lim_{\lto i} \msp M / F^i(M)$, and $M$ is called
{\em $F$-adically complete} (and separated) if the canonical homomorphism
$\tau_{M, F} : M \to \La_{F}(M)$ is an isomorphism.

Both the $\a$-adic filtration and the $F$-adic filtration determine $A$-linear
topologies on the $A$-module $M$. Let's denote the corresponding topological
spaces by $(M, \a)$ and $(M, F)$. We say that the $F$-adic topology
on $M$ is weaker than the $\a$-adic topology on it if the identity of $M$ is a
continuous function $(M, \a) \to (M, F)$. Concretely, this means that for every
$i \in I$ there exists $j \in \N$ such that $\a^j \cd M \sub F^i(M)$.

The first theorem is a slight improvement of \cite[Proposition A1]{Og}.

\begin{thm} \label{thm:410}
Let $A$ be a commutative ring and $\a \sub A$ a finitely generated ideal.
Let $M$ be an $A$-module, with a filtration
$\{ F^i(M) \}_{i \in I}$ indexed by a directed set $(I, \leq)$.
Assume that $M$ is $F$-adically complete, and the $F$-adic topology on $M$ is
weaker than the $\a$-adic topology. Then $M$ is $\a$-adically complete.
\end{thm}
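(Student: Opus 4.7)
The plan is to prove that $\tau_{M, \a}$ is an isomorphism, which I would do in two stages. For injectivity (equivalently, $\a$-adic separatedness of $M$), the hypothesis that the $F$-adic topology is weaker than the $\a$-adic one yields a natural map $\pi \colon \La_{\a}(M) \to \La_{F}(M)$ sitting in the commutative triangle $\pi \circ \tau_{M, \a} = \tau_{M, F}$. Since $\tau_{M, F}$ is an isomorphism by hypothesis, $\tau_{M, \a}$ is automatically split injective.

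For surjectivity, given $x \in \La_{\a}(M)$, I would pick representatives $m_n \in M$ with $m_{n+1} - m_n \in \a^n M$ for all $n \in \N$. The sequence $(m_n)$ is $\a$-Cauchy, hence $F$-Cauchy (the $F$-topology being weaker), so by $F$-completeness it admits an $F$-limit $m \in M$. The task is then to verify $\tau_{M, \a}(m) = x$, i.e., $m - m_n \in \a^n M$ for every $n \in \N$; this is the crux of the proof.

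The key idea exploits the finite generation of $\a$. Writing $\a = (a_1, \ldots, a_r)$ and using $\a^i = \a^n \cdot \a^{i - n}$ for $i \ge n$, each increment $z_i := m_{i + 1} - m_i \in \a^i M$ admits a decomposition
\begin{equation*}
z_i \;=\; \sum_{|I| = n} a^I \, e_{i, I}, \qquad e_{i, I} \in \a^{i - n} M,
\end{equation*}
the sum ranging over the \emph{finite} set of multi-indices $I$ of degree $n$ in the variables $a_1, \ldots, a_r$. For each fixed $I$, the partial sums $E_{K, I} := \sum_{i = n}^{K - 1} e_{i, I}$ are $\a$-Cauchy, hence $F$-Cauchy, so $F$-completeness yields $F$-limits $E_{\infty, I} \in M$. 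Since each $F^k M$ is an $A$-submodule, multiplication by $a^I$ is $F$-continuous, and the finiteness of the index set permits commuting the $F$-limit past the sum to give
\begin{equation*}
m - m_n \;=\; F\text{-}\lim_K \sum_{|I| = n} a^I \, E_{K, I} \;=\; \sum_{|I| = n} a^I \, E_{\infty, I} \;\in\; \a^n M.
\end{equation*}

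The main obstacle this strategy is designed to sidestep is that $\a^n M$ need not be $F$-closed in $M$ in the non-noetherian setting; a naive argument simply taking the $F$-limit of $(m_K - m_n)_{K \ge n}$ would place $m - m_n$ only in the $F$-closure of $\a^n M$. The decomposition above bypasses the closure question entirely by exhibiting $m - m_n$ as an \emph{explicit} finite $A$-linear combination with coefficients in the ideal $\a^n$.
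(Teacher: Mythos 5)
Your proof is correct, but it follows a genuinely different route from the paper's. The paper also starts with the triangle $\psi \circ \tau_{M, \a} = \tau_{M, F}$ to get that $\tau_{M, \a}$ is a split injection, but then it stops computing: it writes $\La_{\a}(M) \cong M \oplus K$ and invokes the fact (Proposition \ref{prop:421}, quoting \cite[Corollary 3.6]{Ye3}) that $\La_{\a}(M)$ is itself $\a$-adically complete because $\a$ is finitely generated, so that $M$, being a direct summand and $\La_{\a}$ being additive, is $\a$-adically complete as well; surjectivity of $\tau_{M,\a}$ is never checked by hand. You instead prove surjectivity directly: you lift an element of $\La_{\a}(M)$ to an $\a$-Cauchy sequence, use $F$-completeness to produce an $F$-limit $m$ (your argument here works for an arbitrary directed index set $I$, matching the paper's removal of countability), and then use finite generation of $\a$ in the regrouping $m - m_n = \sum_{|I| = n} a^I E_{\infty, I} \in \a^n M$, which is exactly the standard trick behind the cited idempotence result $\La_\a \La_\a \cong \La_\a$ for finitely generated ideals. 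So in effect you re-prove, in the special case at hand, the external input that the paper imports from \cite{Ye3}: your argument is longer but self-contained and makes transparent where finite generation enters, while the paper's is shorter and isolates the structural point that a direct summand of an $\a$-adically complete module is $\a$-adically complete. One small point you leave implicit: identifying $m - m_n$ with $\sum_{|I|=n} a^I E_{\infty, I}$ uses uniqueness of $F$-limits, i.e.\ $F$-separatedness of $M$; this is harmless, since injectivity of $\tau_{M,F}$ gives $\bigcap_i F^i(M) = 0$, but it is worth saying explicitly.
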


This is repeated as Theorem \ref{thm:416} in Section \ref{sec:ad-comp} and
proved there. Note that in \cite{Og} it is assumed that the indexing set $I$ is
countable; but our proof does not require this.

Here are a few useful corollaries.

\begin{cor} \label{cor:410}
With $\a \sub A$ as in the theorem, suppose $\b \sub A$ is an ideal such that
$\a \sub \b$. If $M$ is $\b$-adically complete, then it is $\a$-adically
complete.
\end{cor}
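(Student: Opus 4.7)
The plan is to deduce this as an essentially immediate application of Theorem \ref{thm:410}. I would take the filtration indexed by $I = \N$ defined by
\[ F^i(M) := \b^i \cd M. \]
Then the $F$-adic completion of $M$ coincides with the $\b$-adic completion $\La_{\b}(M)$, so the hypothesis that $M$ is $\b$-adically complete is exactly the hypothesis that $M$ is $F$-adically complete.

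Next I would verify that the $F$-adic topology on $M$ is weaker than the $\a$-adic topology. Since $\a \sub \b$, we have $\a^i \sub \b^i$ for every $i \in \N$, and therefore $\a^i \cd M \sub \b^i \cd M = F^i(M)$. This is precisely the continuity condition in the paragraph preceding Theorem \ref{thm:410}: for every $i \in \N$ we may take $j := i$.

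With both hypotheses of Theorem \ref{thm:410} in place (using that $\a$ is finitely generated, which is assumed in the Corollary via ``$\a \sub A$ as in the theorem''), the conclusion that $M$ is $\a$-adically complete follows directly. There is no real obstacle here; the only thing to be careful about is that we are free to choose the filtration $F$ we wish, and the choice $F^i(M) = \b^i \cd M$ is tailored so that $F$-adic completeness is $\b$-adic completeness while the topology comparison becomes trivial. Note that $\b$ itself is not required to be finitely generated, since $\b$ enters only through the filtration $\{ \b^i \cd M \}$ and not through the hypotheses of Theorem \ref{thm:410}.
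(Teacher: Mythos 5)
Your proposal is correct and is essentially the paper's own argument: the paper's one-line proof of this corollary likewise applies Theorem \ref{thm:410} to the filtration $F^i(M) := \b^i \cd M$, noting that $\a \sub \b$ makes the $\b$-adic topology weaker than the $\a$-adic one. You have merely spelled out the same steps (including the harmless observation that $\b$ need not be finitely generated) in more detail.
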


The next is \cite[Corollary A3]{Og}.

\begin{cor} \label{cor:411}
With $\a \sub A$ as in the theorem, assume $M$ is $\a$-adically complete, and
$N \sub M$ is an $A$-submodule that is closed for the $\a$-adic topology. Then
$N$ is $\a$-adically complete.
\end{cor}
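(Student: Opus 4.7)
The plan is to apply Theorem \ref{thm:410} to the submodule $N$, equipped with the \emph{induced} filtration $F^i(N) := N \cap \a^i \cd M$, indexed by $i \in \N$. For this I must check two things: that the $F$-adic topology on $N$ is weaker than the $\a$-adic topology, and that $N$ is $F$-adically complete.

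The first is immediate: since $\a^i \cd N \sub N$ and $\a^i \cd N \sub \a^i \cd M$, one has $\a^i \cd N \sub F^i(N)$ for every $i$, which is the concrete condition stated in the introduction (with $j = i$).

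The main step is $F$-adic completeness of $N$. For each $i$ there is a short exact sequence of $A$-modules
\[
0 \to N / (N \cap \a^i \cd M) \to M / \a^i \cd M \to M / (N + \a^i \cd M) \to 0 .
\]
All three inverse systems have surjective transition maps, so applying $\lim_{\lto i}$ preserves exactness and yields
\[
0 \to \La_F(N) \to \La_{\a}(M) \to \lim_{\lto i} \, M / (N + \a^i \cd M) .
\]
By hypothesis $\La_{\a}(M) = M$, and the kernel of the last map is $\bigcap_i (N + \a^i \cd M)$, which is precisely the closure of $N$ in the $\a$-adic topology of $M$. Since $N$ is closed, this intersection equals $N$, and therefore $\La_F(N) \cong N$ (with the canonical map $\tau_{N, F}$ as isomorphism). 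Thus $N$ is $F$-adically complete.

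Theorem \ref{thm:410} now applies to $N$ with the filtration $F$, and gives that $N$ is $\a$-adically complete. The only substantive point is the second step, where the topological closedness hypothesis $\bar{N} = N$ must be translated into the algebraic identity $\bigcap_i (N + \a^i \cd M) = N$; once this is in hand, the rest of the argument is formal.
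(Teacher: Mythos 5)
Your proposal is correct and follows essentially the same route as the paper: equip $N$ with the induced filtration $F^i(N) = N \cap \a^i \cd M$, identify $\La_F(N)$ inside $M \cong \La_{\a}(M)$ with the $\a$-adic closure of $N$ (you via $\bigcap_i (N + \a^i \cd M)$, the paper via $\lim_{\lto i} N / F^i(N)$ -- the same computation), use closedness to conclude $F$-adic completeness, and then invoke Theorem \ref{thm:410}. Your appeal to surjective transition maps is not even needed, since only left exactness of the inverse limit is used.
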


The next is a slight improvement of \cite[Corollary A2]{Og}.
Again we do not require the set $I$ to be countable.

\begin{cor} \label{cor:412}
With $\a \sub A$ as in the theorem, let
$\{ M_i \}_{i \in I}$ be an inverse system of $A$-modules,
indexed by a directed set $(I, \leq)$.
Assume every $M_i$ is an $\a$-adically complete module. Define the $A$-module
$M := \lim_{\lto i} M_i$. Then $M$ is $\a$-adically complete.
\end{cor}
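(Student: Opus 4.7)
The plan is to deduce Corollary~\ref{cor:412} from Theorem~\ref{thm:410} by equipping $M$ with a suitably chosen filtration. A first attempt would be the filtration $F^i(M) := \ker(M \to M_i)$ indexed by $I$ itself: this is $F$-adically complete thanks to $M = \lim_{\lto i} M_i$, but the weaker-topology condition fails, because the image of $\a^n \cd M$ in $M_i$ is $\a^n \cd M_i$, which does not vanish in general. The fix I would use is to enlarge the index and fold the $\a$-adic structure of each $M_i$ into the filtration.

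Concretely, I would take $J := I \times \N$ with the product order, which is again directed, and set
\[
F^{(i, n)}(M) \;:=\; \ker\bigl( M \to M_i / \a^n \cd M_i \bigr).
\]
For $(i, n) \leq (j, m)$, the $A$-linear transition $M_j \to M_i$ together with the quotient $M_i / \a^m \cd M_i \surj M_i / \a^n \cd M_i$ produces a natural map $M_j / \a^m \cd M_j \to M_i / \a^n \cd M_i$, so $F^{(j, m)}(M) \sub F^{(i, n)}(M)$; hence $\{F^{(i,n)}(M)\}_{(i,n) \in J}$ is a descending filtration by $A$-submodules. The weaker-topology hypothesis of Theorem~\ref{thm:410} is then immediate: since $\a^n \cd M$ maps into $\a^n \cd M_i$, it vanishes modulo $\a^n \cd M_i$, giving $\a^n \cd M \sub F^{(i, n)}(M)$.

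It remains to verify that $M$ is $F$-adically complete. The key observation is that $M / F^{(i, n)}(M)$ equals the image of $M$ in $M_i / \a^n \cd M_i$. Using that each $M_i$ is $\a$-adically complete, I would compute
\[
\lim_{\lto (i, n) \in J} M_i / \a^n \cd M_i
\;=\; \lim_{\lto i} \lim_{\lto n} M_i / \a^n \cd M_i
\;=\; \lim_{\lto i} M_i
\;=\; M.
\]
The submodule $\lim_{\lto (i, n)} M / F^{(i, n)}(M)$ of this limit already contains the image of $M$ under the canonical map $\tau_{M, F}$, so it must equal $M$; this shows that $\tau_{M, F}$ is an isomorphism, i.e., $M$ is $F$-adically complete. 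Theorem~\ref{thm:410} then yields the $\a$-adic completeness of $M$.

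The only real obstacle I foresee is identifying the correct indexing set: a single-parameter filtration indexed by $I$ alone cannot meet both hypotheses of Theorem~\ref{thm:410} at once, whereas the product $J = I \times \N$ faithfully encodes both the inverse-limit structure and the $\a$-adic topology, which is exactly what allows the theorem to apply.
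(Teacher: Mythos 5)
Your proposal is correct and follows essentially the same route as the paper: the same index set $I \times \N$ with the product order, the same filtration $F^{(i,n)}(M) = \operatorname{Ker}\bigl(M \to M_i/\a^n \cd M_i\bigr)$, the same observation that $\a^n \cd M \sub F^{(i,n)}(M)$, and then an application of Theorem \ref{thm:410}. The only cosmetic difference is that where you verify $F$-adic completeness by hand (via the inclusion of $\lim_{\lto} M/F^{(i,n)}(M)$ into $\lim_{\lto} M_i/\a^n \cd M_i \cong M$), the paper simply invokes its Proposition \ref{prop:420}, whose proof is exactly the argument you spelled out.
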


The corollaries are repeated and proved in Section \ref{sec:ad-comp} of the
paper.

We now move to $\DD$-modules. Let $A$ be a commutative ring of characteristic
$0$, i.e.\ $A$ contains the field of rational numbers $\Q$.
We do not assume $A$ is noetherian.
Let $B := A[[t_1, \ldots, t_n]]$, the ring of power series over $A$ in $n$
variables, and let $\b \sub B$ be the ideal generated be the variables. We
know that $B$ is $\b$-adically complete.

Consider the ring $\DD_{B / A}$ of $A$-linear differential operators on $B$ (in
the sense of Gro\-then\-dieck, see \cite[Section 16]{EGA-IV}).
For every $j$ let $\pa_j := \pa / \pa t_j$, the partial derivative along $t_j$.
In Proposition \ref{prop:425} we show that $\DD_{B / A}$ is a Weyl algebra,
namely as a left $B$-module it is free with basis the collection of monomial
operators $\{ \pa_1^{k_1} \cdots \pa_n^{k_n} \}$, indexed by
$(k_1, \ldots, k_n) \in \N^n$, and the usual relations hold.

Let $M$ be a $B$-module. A left $\DD_{B / A}$-module structure on $M$ is the
same as an {\em integrable continuous connection}
$\nabla : M \to \wh{\Om}^1_{B / A} \ot_B M$.
Here $\wh{\Om}^1_{B / A}$ is the complete module of differential $1$-forms,
which is a free $B$-module with basis $(\d t_1, \ldots, \d t_n)$.
The passage between connections and $\DD$-modules is this: for an element
$m \in M$ there is equality
$\nabla(m) = \sum_j \msp \d t_j \ot \pa_j(m)$
in $\wh{\Om}^1_{B / A} \ot_B M$.
Unfortunately, we could not find a reference in the literature for this
interplay between $\DD$-modules and connections.
Anyhow, this is not required for our work, and it is mentioned only for the sake
of comparison with the work of Ogus.

Given a left $\DD_{B / A}$-module $M$, its {\em horizontal submodule} is the
$A$-module
\begin{equation} \label{eqn:410}
M^{\mrm{hor}} :=
\bigl\{ \msp m \in M \mid \pa_j(m) = 0
\ \tup{for all} \ j \msp \bigr\}  .
\end{equation}
In terms of the corresponding connection $\nabla$ we have
$M^{\mrm{hor}} = \opn{Ker}(\nabla)$.

The following theorem is a generalization of \cite[Theorem 1.3]{Og},
stated in terms of $\DD$-modules. Ogus works with integrable connections,
and assumes that the ring $A$ is noetherian. We do not put any finiteness
condition on $A$.

\begin{thm} \label{thm:411}
Let $A$ be a commutative ring containing $\Q$, let
$B := A[[t_1, \ldots, t_n]]$, and let
$\b \sub B$ be the ideal generated by the variables.
Suppose $M$ is a left $\DD_{B / A}$-module, which is $\b$-adically complete as
a $B$-module. Then the canonical $\DD_{B / A}$-module homomorphism
\[ B \ \wh{\ot}_A \ M^{\mrm{hor}} \to M \]
is bijective.
\end{thm}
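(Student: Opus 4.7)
The plan is to construct an explicit inverse to the canonical $B$-linear map $\mu : B \wh{\ot}_A M^{\mrm{hor}} \to M$ using a formal Taylor trivialization of the flat connection on $M$. Write $\pa^k := \pa_1^{k_1} \cdots \pa_n^{k_n}$ and $t^k := t_1^{k_1} \cdots t_n^{k_n}$ for multi-indices $k \in \N^n$. A preliminary observation is that $\pa_i(\b^l \cdot M) \sub \b^{l - 1} \cdot M$, so each $\pa_i$ is continuous for the $\b$-adic topology on $M$. Using $\Q \sub A$ together with the $\b$-adic completeness of $M$, define the $A$-linear operator
\[
 \psi : M \to M , \qquad \psi(m) := \sum_{k \in \N^n} \frac{(-t)^k}{k!} \cdot \pa^k(m) ;
\]
convergence is clear from $(-t)^k \in \b^{|k|}$. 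Passing each $\pa_i$ through $\psi$ term-by-term and reindexing $k \leftrightarrow k + e_i$ yields two sums that cancel, so $\psi(M) \sub M^{\mrm{hor}}$; and when $m \in M^{\mrm{hor}}$ only the $k = 0$ term survives, so $\psi$ restricts to the identity on $M^{\mrm{hor}}$.

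Next I introduce the candidate inverse
\[
 \xi : M \to B \wh{\ot}_A M^{\mrm{hor}} , \qquad \xi(m) := \sum_{k \in \N^n} \frac{t^k}{k!} \ot \psi(\pa^k(m)) ,
\]
which converges in the $\b$-adic topology on the target for the same reason. For $\mu \circ \xi = \mrm{id}_M$, substitute the definitions and collect the coefficient of each $\pa^l(m)$: the inner scalar sum is $\frac{1}{l!} \sum_{j + k = l} \binom{l}{k} t^k (-t)^j = \frac{(t - t)^l}{l!} = \delta_{l, 0}$, leaving only the $l = 0$ term, which equals $m$. For $\xi \circ \mu = \mrm{id}$, by continuity it suffices to verify $\xi(b \cdot m_0) = b \ot m_0$ on elementary tensors with $b \in B$ and $m_0 \in M^{\mrm{hor}}$. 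Here Leibniz together with $\pa_i(m_0) = 0$ collapses $\pa^k(b \cdot m_0)$ to $\pa^k(b) \cdot m_0$, and the reverse Taylor identity $\sum_l \frac{(-t)^l}{l!} \cdot f^{(l)}(t) = f(0)$ in $B$ (a direct coefficient computation using $(1 - 1)^r = \delta_{r, 0}$) applied with $f = \pa^k(b)$ gives $\psi(\pa^k(b) \cdot m_0) = \pa^k(b)|_{t = 0} \cdot m_0 \in A \cdot m_0$. The scalars $\pa^k(b)|_{t = 0}/k!$ are the Taylor coefficients of $b$, so pulling them across the tensor reassembles the sum to $b \ot m_0$.

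I expect the main obstacle to be the rigorous bookkeeping of the double infinite sums, in particular justifying the exchange of the operators $\pa_i$ with the infinite series defining $\psi$, and the manipulations of series inside $B \wh{\ot}_A M^{\mrm{hor}}$. These are controlled by the continuity estimate $\pa_i(\b^l \cdot M) \sub \b^{l - 1} \cdot M$ together with the $\b$-adic completeness of $M$ and of $B \wh{\ot}_A M^{\mrm{hor}}$, but the multi-index combinatorics must be handled with care.
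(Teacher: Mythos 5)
Your overall strategy is sound and genuinely different from the paper's (which proves the case $n=1$ as Theorem \ref{thm:240} and then inducts on $n$, using the completeness results of Section \ref{sec:ad-comp}); your computations of $\psi$, of $\pa_i \circ \psi = 0$, of $\mu \circ \xi = \mrm{id}_M$, and of the identity $\xi(b \cd m_0) = b \ot m_0$ on elementary tensors are all correct, modulo routine rearrangement of unconditionally convergent sums. However, there is a genuine gap at the step where you deduce $\xi \circ \mu = \mrm{id}$ ``by continuity'' from its validity on elementary tensors. That density argument needs $\xi$ (or $\xi \circ \mu$) to be continuous, and this does \emph{not} follow from the estimate $\pa_i(\b^l \cd M) \sub \b^{l-1} \cd M$ together with completeness, which is all you invoke. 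The point is that the $\b$-adic topology on $B \msp \wh{\ot}_A \msp M^{\mrm{hor}}$ treats $M^{\mrm{hor}}$ as discrete: since
$(B \msp \wh{\ot}_A \msp M^{\mrm{hor}}) \msp / \msp \b^N \cd (B \msp \wh{\ot}_A \msp M^{\mrm{hor}}) \cong \boplus_{\abs{\bs{k}} < N} \, t^{\bs{k}} \ot M^{\mrm{hor}}$,
a term $\smfrac{t^{\bs{k}}}{\bs{k}!} \ot \psi(\pa^{\bs{k}}(m))$ with $\abs{\bs{k}} < N$ lies in $\b^N \cd (B \msp \wh{\ot}_A \msp M^{\mrm{hor}})$ only if $\psi(\pa^{\bs{k}}(m)) = 0$. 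Knowing that for $m \in \b^N \cd M$ the element $\psi(\pa^{\bs{k}}(m))$ lies in $M^{\mrm{hor}} \cap \b^{N - \abs{\bs{k}}} \cd M$ gives no smallness in the target, because a priori (before the theorem is proved) one cannot rule out nonzero horizontal elements deep in the $\b$-adic filtration of $M$. So, as written, the passage from elementary tensors to all of $B \msp \wh{\ot}_A \msp M^{\mrm{hor}}$ is unjustified; note that the paper itself is careful on exactly this kind of point, remarking that even the continuity of $\psi$ is not immediate, and it structures the injectivity proof (Step 2 of Theorem \ref{thm:240}) so as to use only the continuity of the canonical, $B$-linear map.

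The gap is fixable inside your framework: prove in addition that $\psi$ annihilates $\b \cd M$, i.e.\ $\psi(t_i \cd m) = 0$ for all $m \in M$ and all $i$. This is a computation of the same kind as your proof that $\pa_i \circ \psi = 0$: by Leibniz, $\pa^{\bs{k}}(t_i \cd m) = t_i \cd \pa^{\bs{k}}(m) + k_i \cd \pa^{\bs{k} - \bs{e}_i}(m)$, and after reindexing $\bs{k} \leftrightarrow \bs{k} + \bs{e}_i$ the two resulting series cancel. With this vanishing in hand, for $m \in \b^N \cd M$ every term of $\xi(m)$ with $\abs{\bs{k}} < N$ is zero (since then $\pa^{\bs{k}}(m) \in \b \cd M$), so $\xi(\b^N \cd M) \sub \b^N \cd (B \msp \wh{\ot}_A \msp M^{\mrm{hor}})$, the map $\xi$ is continuous, and your density argument becomes legitimate. (Alternatively, one can keep your surjectivity argument and prove injectivity of $\mu$ by the lowest-order-coefficient argument of the paper, which avoids any continuity claim about $\xi$.) Once patched, your direct multivariable construction of an explicit inverse is a valid alternative to the paper's induction on $n$, trading the appeal to the adic-completeness results (Corollaries \ref{cor:420} and \ref{cor:421}) for heavier multi-index series bookkeeping.
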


In the theorem, $B \ \wh{\ot}_A \ M^{\mrm{hor}}$ is the $\b$-adic
completion of the $B$-module $B \ot_A M^{\mrm{hor}}$. It has an induced left
$\DD_{B / A}$-module structure. Ogus calls the corresponding connection a {\em
completely constant connection}.

The $1$-dimensional case of Theorem \ref{thm:411} is Theorem \ref{thm:240}.
For $n \geq 2$ it is Theorem \ref{thm:215}, which is proved using Theorem
\ref{thm:240} and induction on $n$. The induction requires Corollaries
\ref{cor:410} and \ref{cor:411}.

\begin{cor} \label{cor:450}
Under the assumptions of the theorem, the canonical $A$-module homomorphism
$M^{\mrm{hor}} \to M / \b \cd M$ is bijective.
\end{cor}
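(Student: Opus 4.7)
The plan is to deduce the corollary directly from Theorem \ref{thm:411} by reducing modulo $\b$ on both sides of the isomorphism $B \wh{\ot}_A M^{\mrm{hor}} \iso M$.

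First I would observe that for any $B$-module $N$, since $\b$ is finitely generated, the $\b$-adic completion map $N \to \wh{N}$ induces an isomorphism $N / \b \cd N \iso \wh{N} / \b \cd \wh{N}$ (this is the case $k=1$ of the standard fact that $N / \b^k N \iso \wh{N} / \b^k \wh{N}$). Applying this to $N := B \ot_A M^{\mrm{hor}}$, whose $\b$-adic completion is $\wh{N} = B \wh{\ot}_A M^{\mrm{hor}}$, and combining with Theorem \ref{thm:411}, I obtain a chain of $A$-module isomorphisms
\[
M / \b \cd M \;\cong\; \wh{N} / \b \cd \wh{N} \;\cong\; N / \b \cd N \;\cong\; (B / \b) \ot_A M^{\mrm{hor}} \;\cong\; A \ot_A M^{\mrm{hor}} \;\cong\; M^{\mrm{hor}}.
\]

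Next I would verify that the composite isomorphism $M^{\mrm{hor}} \iso M / \b \cd M$ obtained by reading this chain backwards agrees with the canonical map in the statement. Under the isomorphism of Theorem \ref{thm:411}, an element $m \in M^{\mrm{hor}}$ is the image of $1 \ot m \in B \wh{\ot}_A M^{\mrm{hor}}$, and $1 \ot m$ reduces to $1 \ot m$ in $A \ot_A M^{\mrm{hor}} = M^{\mrm{hor}}$, i.e.\ to $m$ itself. Thus the composition $M^{\mrm{hor}} \to M / \b \cd M \iso M^{\mrm{hor}}$ is the identity, and hence the canonical map $M^{\mrm{hor}} \to M / \b \cd M$ is bijective.

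There is no real obstacle here; the only point worth double-checking is the naturality step showing that the algebraically constructed isomorphism coincides with the canonical reduction map, so that the surjectivity and injectivity statements really refer to the right arrow. Everything else is a formal consequence of Theorem \ref{thm:411} and the elementary identity $\wh{N} / \b \cd \wh{N} \cong N / \b \cd N$ for $\b$ finitely generated.
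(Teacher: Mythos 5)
Your argument is correct and is essentially the paper's own proof: the paper likewise deduces the corollary from Theorem \ref{thm:411} by noting the statement is transparent for the induced module $B \msp \wh{\ot}_A \msp M^{\mrm{hor}}$ and transporting it through the isomorphism $\th$, while you spell this out as a reduction modulo $\b$ (using $\wh{N}/\b \cd \wh{N} \cong N/\b \cd N$ for the finitely generated ideal $\b$, which is Proposition \ref{prop:421}(2)) together with the naturality check $\th(1 \ot m) = m$. The only cosmetic difference is that the paper invokes the $\DD_{B/A}$-linearity of $\th$ to match horizontal submodules, whereas your verification needs only its $B$-linearity and the explicit description of the canonical map.
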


A $B$-module $M$ is called {\em $\b$-adically free} if $M$ is the $\b$-adic
completion of a free $B$-module.

\begin{cor} \label{cor:413}
Under the assumptions of the theorem, if $M / \b \cd M$ is a free $A$-module,
then $M$ is a $\b$-adically free $B$ module.
\end{cor}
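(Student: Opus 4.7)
The plan is to combine the two results stated just above: Theorem \ref{thm:411} identifies $M$ with the $\b$-adic completion $B \wh{\ot}_A M^{\mrm{hor}}$, and Corollary \ref{cor:450} gives a canonical isomorphism $M^{\mrm{hor}} \iso M / \b \cd M$ of $A$-modules. Together these reduce the corollary to a near-tautology; the only thing one must check is that freeness of an $A$-module is preserved by extension of scalars along $A \to B$ before completion.

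Concretely, I would proceed as follows. First, by Corollary \ref{cor:450} the $A$-module $M^{\mrm{hor}}$ is isomorphic to $M / \b \cd M$, which is free by hypothesis; choose a basis $\{ e_s \}_{s \in S}$ of $M^{\mrm{hor}}$ indexed by some set $S$. Second, let $F := B^{(S)} = \boplus_{s \in S} B \cd e_s$ be the free $B$-module on that basis. Then the canonical map $F \to B \ot_A M^{\mrm{hor}}$ sending $e_s$ to $1 \ot e_s$ is a $B$-linear isomorphism, because $M^{\mrm{hor}} \cong A^{(S)}$ as $A$-modules and tensoring with $B$ over $A$ commutes with arbitrary direct sums. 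Passing to the $\b$-adic completion gives
\[ \La_{\b}(F) \iso \La_{\b}(B \ot_A M^{\mrm{hor}}) = B \ \wh{\ot}_A \ M^{\mrm{hor}} . \]
Third, Theorem \ref{thm:411} supplies a $B$-linear (in fact $\DD_{B/A}$-linear) isomorphism $B \ \wh{\ot}_A \ M^{\mrm{hor}} \iso M$. Composing, we obtain an isomorphism $\La_{\b}(F) \iso M$ of $B$-modules, exhibiting $M$ as the $\b$-adic completion of the free $B$-module $F$. By the definition of $\b$-adically free recalled just before the statement, this means $M$ is $\b$-adically free, as required.

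There is no real obstacle here: once Theorem \ref{thm:411} and Corollary \ref{cor:450} are in hand, the argument is purely formal. The only minor point to be attentive to is that the basis $S$ of $M^{\mrm{hor}}$ may be infinite, so that the identification $B \ot_A A^{(S)} \cong B^{(S)}$ uses that tensor product commutes with arbitrary direct sums (whereas $\b$-adic completion of course does not); but this is handled precisely by completing the already-free module $F$ at the end.
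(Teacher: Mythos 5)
Your argument is correct and is essentially the paper's own proof: identify $M^{\mrm{hor}}$ with the free module $M/\b\cd M$ via Corollary \ref{cor:450}, observe that $B \ot_A M^{\mrm{hor}}$ is then a free $B$-module, and conclude from Theorem \ref{thm:411} that $M \cong B \ \wh{\ot}_A \ M^{\mrm{hor}}$ is its $\b$-adic completion, hence $\b$-adically free. The extra care you take with infinite bases and the commutation of $\ot_A$ with direct sums is a fine (if implicit in the paper) elaboration, not a different route.
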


The following corollary is a special case of the previous one.
It is \cite[Proposition 8.9]{Ka} (which is phrased in terms of connections).

\begin{cor} \label{cor:414}
Assume $A$ is a field and $M$ is a finitely generated $B$-module.
Then $M \cong B^{\oplus r}$ as $\DD_{B / A}$-modules, for some natural number
$r$.
\end{cor}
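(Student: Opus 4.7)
The plan is to reduce to Corollary \ref{cor:413} by verifying its two hypotheses: that $M$ is $\b$-adically complete as a $B$-module (which is needed to apply Theorem \ref{thm:411} through \ref{cor:413}), and that $M / \b \cd M$ is a free $A$-module.

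First I would verify that $M$ is $\b$-adically complete. Since $A$ is a field, $B = A[[t_1, \ldots, t_n]]$ is a noetherian local ring, with maximal ideal $\b$, and $B$ is itself $\b$-adically complete. It is a standard fact from commutative algebra that any finitely generated module over a noetherian ring that is complete with respect to a finitely generated ideal is complete with respect to that ideal; so $M$ is $\b$-adically complete. Next, $M / \b \cd M$ is finitely generated over $B / \b \cong A$, and since $A$ is a field this makes $M / \b \cd M$ a finite-dimensional $A$-vector space, hence a free $A$-module of some finite rank $r \geq 0$.

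With both hypotheses in place, Corollary \ref{cor:413} gives that $M$ is $\b$-adically free as a $B$-module, and moreover the isomorphism respects the $\DD_{B / A}$-action. More precisely, the canonical $\DD_{B / A}$-linear map $B \ \wh{\ot}_A \ M^{\mrm{hor}} \to M$ of Theorem \ref{thm:411} is bijective, and by Corollary \ref{cor:450} one has $M^{\mrm{hor}} \cong M / \b \cd M \cong A^{\oplus r}$ as $A$-modules. Therefore $M \cong B \ \wh{\ot}_A \ A^{\oplus r}$ as $\DD_{B / A}$-modules.

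Finally, I would observe that $B \ \wh{\ot}_A \ A^{\oplus r} \cong B^{\oplus r}$ as $\DD_{B / A}$-modules, because $B^{\oplus r}$ is a finite direct sum of $\b$-adically complete modules, hence already $\b$-adically complete, so no further completion is needed. Combining the two identifications yields $M \cong B^{\oplus r}$ as left $\DD_{B / A}$-modules, as required. There is no real obstacle here beyond checking completeness of $M$; once the hypotheses of Corollary \ref{cor:413} are secured, the structure theorem does all the work.
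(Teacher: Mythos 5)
Your proposal is correct and follows essentially the same route as the paper's own (very terse) proof: since $A$ is a field, $M / \b \cd M \cong A^{\oplus r}$ is free, so Corollary \ref{cor:413} applies, and $B^{\oplus r}$ is already $\b$-adically complete, so no further completion is needed. Your extra verification that $M$ is automatically $\b$-adically complete (finitely generated over the complete noetherian ring $B$) is a sensible supplement, since the paper simply subsumes this under ``the assumptions of the theorem.''
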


These corollaries are proved in Section \ref{sec:high-dim}.

Finally, in Appendix A, we discuss an error we discovered in the book
\cite{Bj}, in the proof of the analytic variant of Theorem \ref{thm:411}.

\bigskip \noindent
{\em Acknowledgments}. I wish to thank Reinhold H\"ubl, Rolf K\"allstr\"om and
Pierre Schapira for their useful advice in preparing this article.

\section{Adically Complete Modules} \label{sec:ad-comp}

In this section we study adic completions of modules. Theorem \ref{thm:416} is
a slight improvement of \cite[Proposition A1]{Og}.

Let $A$ be a nonzero commutative ring, and let $M$ be an $A$-module.
We do not assume that $A$ is noetherian, nor that $M$ is finitely generated..

Suppose $\{ F^i(M) \}_{i \in I}$ is an inverse system  of $A$-submodules of $M$,
indexed by a directed set $(I, \leq)$. We call such a system a {\em filtration
of $M$}. The collection of quotients \lb
$\{ M / F^i(M) \}_{i \in I}$
is also an inverse system, and its inverse limit
\begin{equation} \label{eqn:415}
\La_F(M) := \lim_{\lto i} \msp M / F^i(M)
\end{equation}
is called the {\em $F$-adic completion} of $M$.
There is a canonical homomorphism
$\tau_{M, F} : M \to \La_F(M)$,
and we say that $M$ is  {\em $F$-adically complete} if $\tau_{M, F}$ is an
isomorphism.

The next proposition is probably known, but we are not aware of a published
reference.

\begin{prop} \label{prop:420}
Let $\{ M_i \}_{i \in I}$ be an inverse system of $A$-modules, indexed by a
directed set $(I, \leq)$, and let
$M := \lim_{\lto i} \msp M_i$.
Define the filtration $\{ F^i(M) \}_{i \in I}$ of $M$
by $F^i(M) := \opn{Ker} \bigl( M \to M_i \bigr)$.
Then the $A$-module $M$ is $F$-adically complete.
\end{prop}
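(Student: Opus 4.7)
The plan is to unpack the definitions and identify $\La_F(M)$ with an inverse limit of certain submodules of the $M_i$, which then embeds naturally into $\lim_{\lto i} M_i = M$. The key observation is that $M/F^i(M)$ is canonically isomorphic to the image of $M$ inside $M_i$. Once this identification is in place, the statement reduces quickly to the universal property of the inverse limit.

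First I would introduce the image $N_i := \opn{Im}(M \to M_i) \sub M_i$. Since $F^i(M)$ is by definition the kernel of $M \to M_i$, the first isomorphism theorem supplies canonical isomorphisms $M / F^i(M) \cong N_i$. For indices $i \leq j$, the transition map $M_j \to M_i$ carries $N_j$ into $N_i$, and the induced map $N_j \to N_i$ matches the transition map $M / F^j(M) \to M / F^i(M)$. Hence $\{N_i\}_{i \in I}$ is an inverse subsystem of $\{M_i\}_{i \in I}$, and passing to limits produces both a canonical isomorphism $\La_F(M) \cong \lim_{\lto i} N_i$ and an inclusion $\lim_{\lto i} N_i \hookrightarrow \lim_{\lto i} M_i = M$.

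The last step is to verify that the resulting composite $\La_F(M) \cong \lim_{\lto i} N_i \hookrightarrow M$ is a two-sided inverse to $\tau_{M, F}$; this follows at once from the observation that the map $M \to N_i \hookrightarrow M_i$ is precisely the $i$-th structure map of the limit $M = \lim_{\lto i} M_i$. The only subtle point, and the one I would flag as the main potential pitfall, is the need to resist identifying $\La_F(M)$ with $\lim_{\lto i} M_i$ on the nose: this coincidence holds only when every structure map $M \to M_i$ is surjective, and in general one must work with the inverse subsystem of images $\{N_i\}$ rather than with $\{M_i\}$ itself.
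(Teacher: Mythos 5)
Your proposal is correct and follows essentially the same route as the paper: the paper likewise identifies $M/F^i(M)$ with its image in $M_i$ via the injections $\psi_i$, passes to the limit to get an injection $\La_F(M) \hookrightarrow M$, and concludes from the commutative triangle with $\tau_{M,F}$ that both maps are bijective. Your cautionary remark about not identifying $\La_F(M)$ with $\lim_{\lto i} M_i$ outright is exactly the point the paper's use of the injections $\psi_i$ is designed to handle.
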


We do not assume that the indexing set $I$ is countable.

\begin{proof}
For every $i$ let
$N_i := M / F^i(M)$. There are injections
$\psi_i : N_i \inj M_i$, and in the limit we obtain an injection
\[ \psi := \lim_{\lto i} \psi_i := \La_F(M) =
\lim_{\lto i} N_i \inj \lim_{\lto i} M_i = M . \]
It fits in the commutative diagram of $A$-modules
\[ \begin{tikzcd} [column sep = 10ex, row sep = 6ex]
M
\ar[dr, "{\opn{id}_{M}}"]
\ar[d, "{\tau_{M, F}}"']
\\
\La_{F}(M)
\ar[r, tail, "{\psi}"']
&
M
\end{tikzcd} \]
It follows that $\psi$ is surjective, and thus it is
bijective. Therefore $\tau_{M, F}$ is bijective.
\end{proof}

\begin{cor} \label{cor:415}
Let $M$ be an $A$-module, with a filtration
$\{ F^i(M) \}_{i \in I}$ indexed by a directed set $(I, \leq)$.
Define $\wh{M} := \La_F(M)$, and put on $\wh{M}$ the filtration
$\{ F^i(\wh{M}) \}_{i \in I}$, where
$F^i(\wh{M}) := \opn{Ker} \bigl( \wh{M} \to M / F^i(M) \bigr)$.
Then $\wh{M}$ is $F$-adically complete.
\end{cor}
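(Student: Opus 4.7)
The plan is to reduce this directly to Proposition \ref{prop:420}. The key observation is that $\wh{M} := \La_F(M) = \lim_{\lto i} \msp M / F^i(M)$ is, by its very construction, an inverse limit of $A$-modules. So I would set $M_i := M / F^i(M)$ and apply Proposition \ref{prop:420} to this inverse system, with the role of the ``big'' module $M$ in that proposition now played by $\wh{M}$.

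Under that identification, the filtration produced by the proposition is exactly $\opn{Ker}\bigl( \wh{M} \to M_i \bigr) = \opn{Ker}\bigl( \wh{M} \to M / F^i(M) \bigr)$, which is precisely the filtration $F^i(\wh{M})$ defined in the statement of the corollary. Hence the two filtrations in question literally coincide. The conclusion of Proposition \ref{prop:420} then says that $\wh{M}$ is $F$-adically complete, which is exactly what we need.

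There is essentially no obstacle here beyond bookkeeping; I anticipate the only subtlety is being careful that ``$F$-adically complete'' in the statement of the corollary refers unambiguously to the filtration $\{ F^i(\wh{M}) \}_{i \in I}$ on the new module $\wh{M}$, and not to some transported version of the original filtration on $M$. Once that matching of data is pinned down, the proof is a one-line invocation of Proposition \ref{prop:420}.
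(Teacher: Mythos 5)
Your proposal is correct and is essentially identical to the paper's own proof: both set $M_i := M/F^i(M)$, note that $\wh{M} = \lim_{\lto i} M_i$ with the induced kernel filtration matching $F^i(\wh{M})$, and then invoke Proposition \ref{prop:420}. No gap here.
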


\begin{proof}
For every $i$ let $M_i := M / F^i(M)$.
Then $\wh{M} = \lim_{\lto i \in I} \msp M_i$,
and its filtration $F$ is the same as in Proposition \ref{prop:420}.
The proposition says that $\wh{M}$ is $F$-adically complete.
\end{proof}

An important source of filtrations comes from ideals $\a \sub A$.
Namely, given an ideal $\a$, we define
$F^i(M) := \a^i \cd M$. This is the {\em $\a$-adic filtration} of $M$,
indexed by $I = \N$, and the completion
$\La_{\a}(M) := \La_F(M)$ is called the {\em $\a$-adic completion} of $M$.
Observe that
$\La_{\a} : \cat{Mod}(A) \to \cat{Mod}(A)$
is an $A$-linear functor, and $\tau_{\a} : \opn{Id} \to \La_{\a}$ is a morphism
of functors.

We know (see \cite[Example 1.8]{Ye3}) there there are $A$-modules $M$ whose
$\a$-adic completion $\wh{M}$ is {\em not} $\a$-adically complete.
How can this be reconciled with Corollary \ref{cor:415}~?
The answer is that there are two, possibly distinct, filtrations on
$\wh{M}$, namely
$\opn{Ker} \bigl( \wh{M} \to M \msp  / \msp \a^i \cd M \bigr)$ and
$\a^i \cd \wh{M}$.
When the ideal $\a$ is finitely generated these two filtrations agree. Indeed:

\begin{prop} \label{prop:421}
Let $\a$ be a finitely generated ideal in the ring $A$, and let $M$ be an
arbitrary $A$-module. Consider the $\a$-adic filtration
$\{ \a^i \cd M \}_{i \in \N}$ on $M$, and the completion
$\wh{M} := \La_{\a}(M)$. Then:
\begin{enumerate}
\item The $A$-module $\wh{M}$ is $\a$-adically complete.

\item For every $i$ there is equality
$\a^i \cd M = \opn{Ker}( \wh{M} \to M \msp / \msp \a^i \cd M \bigr)$.
\end{enumerate}
\end{prop}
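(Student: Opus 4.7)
The plan is to prove part (2) first, and then deduce part (1) in a single line from Corollary \ref{cor:415}. The inclusion $\a^i \cd \wh M \sub \opn{Ker}(\wh M \to M / \a^i \cd M)$ in (2) is trivial; the work lies entirely in the reverse inclusion, and this is where finite generation of $\a$ will enter.

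For the reverse inclusion I would write $\a = (a_1, \ldots, a_r)$, so that $\a^i$ is generated by the finite collection of monomials $\{ a^\alpha := a_1^{\alpha_1} \cdots a_r^{\alpha_r} \}_{\abs{\alpha} = i}$, say $s$ of them. First, apply $\lim_{\lto j}$ to the short exact sequences
\[ 0 \to \a^i \cd M / \a^j \cd M \to M / \a^j \cd M \to M / \a^i \cd M \to 0 \qquad (j \geq i). \]
The transition maps on the left-hand system are surjective, so Mittag--Leffler produces
\[ 0 \to \La_\a(\a^i \cd M) \to \wh M \to M / \a^i \cd M \to 0 , \]
where I identify $\La_\a(\a^i \cd M) = \lim_k \a^i \cd M / \a^{i+k} \cd M$ with $\lim_{j \geq i} \a^i \cd M / \a^j \cd M$ using that the $\a$-adic filtration on $\a^i \cd M$ is $\{ \a^{i+k} \cd M \}_{k \in \N}$. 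Second, the surjection $\phi : M^s \surj \a^i \cd M$, $(m_\alpha) \mapsto \sum_\alpha a^\alpha \cd m_\alpha$, induces surjections $(M / \a^k \cd M)^s \surj \a^i \cd M / \a^{i+k} \cd M$ compatibly in $k$, and a second Mittag--Leffler argument then gives a surjection $\wh{M}^s \surj \La_\a(\a^i \cd M)$. Unwinding the inverse limits, the composite $\wh{M}^s \surj \La_\a(\a^i \cd M) \inj \wh M$ is simply $(y_\alpha) \mapsto \sum_\alpha a^\alpha \cd y_\alpha$. Its image is therefore both equal to $\a^i \cd \wh M$ (by the explicit formula) and to $\opn{Ker}(\wh M \to M / \a^i \cd M)$ (from the short exact sequence above), which forces these two submodules to agree.

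For part (1), Corollary \ref{cor:415} places on $\wh M$ the filtration $F^i(\wh M) = \opn{Ker}(\wh M \to M / \a^i \cd M)$ and asserts $\wh M$ is complete for it; by part (2) this coincides with the $\a$-adic filtration $\{ \a^i \cd \wh M \}_i$ on $\wh M$, so $\wh M$ is $\a$-adically complete. The main obstacle I anticipate is the bookkeeping in part (2): keeping straight the two natural filtrations on $\a^i \cd M$ (the $\a$-adic one $\{ \a^{i+k} \cd M \}_k$ versus the filtration $\{ \a^j \cd M \cap \a^i \cd M \}_j$ induced from $M$, which happily coincide in the cofinal range $j \geq i$), and verifying that the composite of the two Mittag--Leffler surjections genuinely reduces to the naive formula. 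Finite generation of $\a$ is used essentially in the second step: a \emph{finite} product $M^s$ suffices to surject onto $\a^i \cd M$, so that its $\a$-adic completion is $\wh{M}^s$ and kernel elements admit finite $\wh{M}$-combinations of the $a^\alpha$'s --- neither of which would be available if $\a$ (equivalently, $\a^i$) required infinitely many generators.
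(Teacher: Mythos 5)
Your argument is correct in substance, but it takes a genuinely different route from the paper: the paper's proof of Proposition \ref{prop:421} consists of two citations, namely \cite[Corollary 3.6]{Ye3} for item (1) and \cite[Theorem 1.2]{Ye3} for item (2), so you are in effect reconstructing the content of the cited results rather than mirroring an argument in the paper. What your route buys is self-containedness: the only inputs are elementary Mittag--Leffler and Corollary \ref{cor:415}, which the paper has already proved, and your deduction of (1) from (2) is exactly in the spirit of the discussion preceding the proposition (once $\a$ is finitely generated, the filtration $\opn{Ker}\bigl( \wh{M} \to M / \a^i \cd M \bigr)$ coincides with $\a^i \cd \wh{M}$, so completeness for the first filtration, which is Corollary \ref{cor:415}, is completeness for the second). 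You also correctly read the left-hand side of item (2) as $\a^i \cd \wh{M}$; the printed $\a^i \cd M$ is a typo, as the paragraph before the proposition makes clear.

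One step should not be left as an assertion: the surjectivity of $\wh{M}^{s} \to \La_{\a}(\a^i \cd M)$. A levelwise surjection of inverse systems, even one in which source and target both have surjective transition maps, need not be surjective on limits (the constant system $\Z$ mapping onto $\{ \Z / p^k \Z \}_k$ is the standard example), so here you must verify the Mittag--Leffler condition for the kernel system
$K_k := \opn{Ker}\bigl( (M / \a^k \cd M)^{s} \to \a^i \cd M / \a^{i+k} \cd M \bigr)$.
This does hold, and it is the second (and decisive) place where finite generation of $\a$ enters: if $(m_\alpha)$ represents a class in $K_k$, then $\sum_\alpha a^{\alpha} \cd m_\alpha \in \a^{i+k} \cd M = \sum_{\abs{\beta} = i} a^{\beta} \cd (\a^{k} \cd M)$, so writing $\sum_\alpha a^{\alpha} \cd m_\alpha = \sum_\beta a^{\beta} \cd n_\beta$ with $n_\beta \in \a^{k} \cd M$ and replacing $m_\alpha$ by $m_\alpha - n_\alpha$ produces a class in $K_{k+1}$ lifting the given one; hence the transition maps $K_{k+1} \to K_k$ are surjective and Mittag--Leffler applies. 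Together with the reindexing check you already flag (the composite $\wh{M}^{s} \to \wh{M}$ is $(y_\alpha) \mapsto \sum_\alpha a^{\alpha} \cd y_\alpha$, because compatible lifts of the components of $y_\alpha$ differ by elements of $\a^{k} \cd M$), this closes the argument, and the identity $\a^i \cd \wh{M} = \sum_\alpha a^{\alpha} \cd \wh{M}$ finishes item (2).
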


\begin{proof}
According to \cite[Corollary 3.6]{Ye3}, the $A$-module $\wh{M}$ is $\a$-adically
complete. By \cite[Theorem 1.2]{Ye3} the equality in item (2) holds.
\end{proof}

Item (1) of the proposition says that the endofunctor $\La_{\a}$ on
$\cat{Mod}(A)$ is idempotent.

The next theorem is an improvement of \cite[Proposition A1]{Og}, where
the indexing set $I$ was assumed to be countable. We give a new proof, relying
on Proposition \ref{prop:421}.

\begin{thm}[Ogus] \label{thm:416}
Let $\a$ be a finitely generated ideal in the ring $A$, and let $M$ be an
arbitrary $A$-module, with a filtration
$\{ F^i(M) \}_{i \in I}$ indexed by some directed set $(I, \leq)$.
Assume that $M$ is $F$-adically complete, and that the $F$-adic topology on $M$
is weaker than the $\a$-adic topology on it. Then $M$ is $\a$-adically complete.
\end{thm}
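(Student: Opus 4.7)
The plan is to show that the canonical homomorphism $\tau_{M, \a} : M \to \wh{M} := \La_{\a}(M)$ is bijective by constructing a two-sided inverse. The hypothesis that the $F$-adic topology is weaker than the $\a$-adic one provides, for every $i \in I$, some $j(i) \in \N$ with $\a^{j(i)} \cd M \sub F^i(M)$, hence a surjection $M / \a^{j(i)} \cd M \surj M / F^i(M)$. As $i$ varies, these quotient maps assemble into a canonical $A$-linear map $\wh{M} = \lim_{\lto k} M / \a^{k} \cd M \to \lim_{\lto i} M / F^i(M) = \La_F(M)$ (a routine check shows the construction is independent of the choice of the $j(i)$); composing with $\tau_{M, F}^{-1}$, which is an isomorphism by $F$-completeness, produces an $A$-linear map $\sigma : \wh{M} \to M$. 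A direct diagram chase shows that $\sigma \circ \tau_{M, \a} = \opn{id}_{M}$, so that $\sigma$ is a retraction of $\tau_{M, \a}$; in particular $\tau_{M, \a}$ is split injective.

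The main task, and the step I expect to require the most care, is to prove the other composition $\tau_{M, \a} \circ \sigma$ is the identity on $\wh{M}$. By Proposition \ref{prop:421}(1), $\wh{M}$ is $\a$-adically complete, hence $\a$-adically separated, and it therefore suffices to verify the desired equality modulo $\a^k \cd \wh{M}$ for each $k \in \N$. Proposition \ref{prop:421}(2) identifies $\wh{M} / \a^k \cd \wh{M}$ with $M / \a^k \cd M$ via the canonical projection $\pi_k : \wh{M} \to M / \a^k \cd M$. The problem thus reduces to showing that the two $A$-linear maps $\pi_k$ and $\rho_k \circ \sigma$ from $\wh{M}$ to $M / \a^k \cd M$ coincide, where $\rho_k : M \surj M / \a^k \cd M$ denotes the canonical projection.

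I will finish with a density argument. Both maps are continuous with respect to the $\a$-adic topology on $\wh{M}$ and the discrete topology on $M / \a^k \cd M$: for $\pi_k$ this is immediate from Proposition \ref{prop:421}(2), while for $\rho_k \circ \sigma$ it follows from the inclusion $\sigma(\a^k \cd \wh{M}) \sub \a^k \cd M$, itself a consequence of the $A$-linearity of $\sigma$ together with the finite generation of $\a^k$. The two maps also agree on the subset $\tau_{M, \a}(M) \sub \wh{M}$, because $\sigma \circ \tau_{M, \a} = \opn{id}_{M}$. Moreover, this subset is dense in $\wh{M}$ in the $\a$-adic topology: for any $\hat{m} \in \wh{M}$ and $k \in \N$, any lift in $M$ of the $k$-th component $\hat{m}_k \in M / \a^k \cd M$ yields an element of $\tau_{M, \a}(M)$ that differs from $\hat{m}$ by an element of $\opn{Ker}(\pi_k) = \a^k \cd \wh{M}$, again by Proposition \ref{prop:421}(2). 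Since $M / \a^k \cd M$ is Hausdorff (being discrete), two continuous maps into it that agree on a dense subset must coincide, which delivers the desired equality $\tau_{M, \a} \circ \sigma = \opn{id}_{\wh{M}}$ and completes the proof.
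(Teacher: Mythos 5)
Your proof is correct. The first half is exactly the paper's construction: you use the comparison of topologies to assemble the map $\psi : \La_{\a}(M) \to \La_{F}(M)$, compose with $\tau_{M,F}^{-1}$, and conclude that $\tau_{M,\a}$ is split injective. Where you diverge is in the finish. The paper stops at the splitting, writes $\La_{\a}(M) \cong M \oplus K$, and invokes the $A$-linearity (additivity) of the functor $\La_{\a}$ together with Proposition \ref{prop:421}(1): since $\La_{\a}(M)$ is $\a$-adically complete and $\tau$ respects the direct sum decomposition, the summand $M$ is complete as well, with no need to identify $K$. You instead prove directly that your retraction $\sigma$ is a two-sided inverse of $\tau_{M,\a}$, reducing via separatedness of $\wh{M}$ to the equality $\rho_k \circ \sigma = \pi_k$ and settling it by a density argument; this uses Proposition \ref{prop:421}(2) (the identification $\opn{Ker}(\pi_k) = \a^k \cd \wh{M}$) in addition to item (1), whereas the paper's argument needs only item (1). (Your appeal to finite generation of $\a^k$ for the inclusion $\sigma(\a^k \cd \wh{M}) \sub \a^k \cd M$ is superfluous -- $A$-linearity alone gives it -- but harmless.) The trade-off: the paper's direct-summand argument is shorter and more abstract, while your version is more explicit and establishes along the way that $\psi$ and $\tau_{M,\a}$ are actually bijective, a fact the paper only records parenthetically.
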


\begin{proof}
Since the $F$-adic topology is weaker than the $\a$-adic topology, for any
$i \in I$ there is some $j \in \N$ such that
$\a^j \cd M \sub F^i(M)$. Composing the canonical surjections
$\La_{\a}(M) \msp \to \msp M / \a^j \cd M$ and
$M \msp / \msp \a^j \cd M \to M \msp / \msp F^i(M)$
we get a homomorphism
$\La_{\a}(M) \to M / F^i(M)$.
These homomorphisms assemble into an inverse system indexed by $I$, and they
induce a homomorphism
$\psi : \La_{\a}(M) \to \La_{F}(M)$.
This homomorphism fits into the commutative diagram
\[ \begin{tikzcd} [column sep = 10ex, row sep = 6ex]
M
\ar[d, "{\tau_{M, \a}}"']
\ar[dr, "{\tau_{M, F}}", "{\simeq}"']
\\
\La_{\a}(M)
\ar[r, "{\psi}"']
&
\La_{F}(M)
\end{tikzcd} \]
We see that $\tau_{M, \a}$ is a split injection, so there is an
$A$-module isomorphism
$\La_{\a}(M) \cong M \oplus K$, where $K := \opn{Ker}(\psi)$.

Since $\La_{\a}(M)$ is $\a$-adically complete (by Proposition \ref{prop:421}),
and since $\La_{\a}$ is an $A$-linear functor, the direct summand $M$ of
$\La_{\a}(M)$ is also $\a$-adically complete.
(So in fact $\psi$ and $\tau_{M, \a}$ are bijective, and $K = 0$.)
\end{proof}

\begin{cor} \label{cor:420}
With $\a \sub A$ as in the theorem, suppose $\b \sub A$ is an ideal such that
$\a \sub \b$. If $M$ is $\b$-adically complete, then it is $\a$-adically
complete.
\end{cor}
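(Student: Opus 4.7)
The plan is to derive this as a direct application of Theorem \ref{thm:416}, using the $\b$-adic filtration on $M$ as the abstract filtration $F$. Concretely, I would set $I := \N$ with its usual order and define $F^i(M) := \b^i \cd M$ for each $i \in \N$. With this choice the $F$-adic completion $\La_F(M)$ is literally the $\b$-adic completion $\La_\b(M)$, so the hypothesis that $M$ is $\b$-adically complete gives that $\tau_{M, F}$ is an isomorphism, i.e.\ $M$ is $F$-adically complete.

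Next I would verify that the $F$-adic topology on $M$ is weaker than the $\a$-adic topology. Given $i \in \N$, the containment $\a \sub \b$ implies $\a^i \sub \b^i$, and multiplying by $M$ yields $\a^i \cd M \sub \b^i \cd M = F^i(M)$. Hence the index $j := i$ witnesses the required continuity condition for every $i \in I$.

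With both hypotheses of Theorem \ref{thm:416} in hand, applying that theorem to $A$, the finitely generated ideal $\a$, the module $M$, and the filtration $F$ immediately yields that $M$ is $\a$-adically complete, which is the claim. I do not anticipate any genuine obstacle here; the only minor point worth noting is that $\b$ itself is not assumed to be finitely generated, but this is irrelevant, because $\b$ enters the argument only through the abstract filtration $\{ \b^i \cd M \}_{i \in \N}$ and the \emph{given} completeness of $M$ with respect to it, while the finite generation hypothesis in Theorem \ref{thm:416} is required only of $\a$.
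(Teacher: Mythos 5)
Your proposal is correct and follows exactly the paper's route: the paper's proof of Corollary \ref{cor:420} is the one-line observation that the $\b$-adic topology on $M$ is weaker than the $\a$-adic topology, which is precisely your filtration $F^i(M) := \b^i \cd M$ fed into Theorem \ref{thm:416}. Your added remark that $\b$ need not be finitely generated is accurate and consistent with the theorem's hypotheses.
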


\begin{proof}
This is because the $\b$-adic topology on $M$ is weaker than the $\a$-adic
topology.
\end{proof}

The next is \cite[Corollary A3]{Og}.

\begin{cor} \label{cor:421}
With $\a \sub A$ as in the theorem, assume $M$ is $\a$-adically complete, and
$N \sub M$ is an $A$-submodule that is closed for the $\a$-adic topology. Then
$N$ is $\a$-adically complete.
\end{cor}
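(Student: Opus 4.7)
The plan is to apply Theorem \ref{thm:416} to $N$, equipped with the \emph{subspace filtration} $F^i(N) := N \cap \a^i \cd M$ inherited from the $\a$-adic filtration of $M$. This reduces the corollary to two verifications: that the $F$-adic topology on $N$ is weaker than the $\a$-adic topology on $N$, and that $N$ is $F$-adically complete.

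The first verification is immediate, since $\a^i \cd N \sub N \cap \a^i \cd M = F^i(N)$ for every $i$. For the second, injectivity of the canonical map $\tau_{N, F} : N \to \La_F(N)$ follows from
\[ \bigcap_{i \in \N} F^i(N) \sub \bigcap_{i \in \N} \a^i \cd M = 0, \]
where the last equality holds because $M$, being $\a$-adically complete, is separated. For surjectivity I would take a compatible family $(n_i)_{i \in \N}$ representing an element of $\La_F(N)$; it satisfies $n_i - n_j \in F^j(N) \sub \a^j \cd M$ whenever $j \leq i$, so it also represents an element of $\La_{\a}(M) = M$, and hence comes from some $m \in M$ with $m - n_i \in \a^i \cd M$ for all $i$.

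The crucial last step --- and the main obstacle in the argument --- is to show that $m$ actually lies in $N$. Here the closedness hypothesis enters: the relation $m \in n_i + \a^i \cd M \sub N + \a^i \cd M$, valid for every $i$, says $m$ lies in the $\a$-adic closure of $N$ in $M$, which equals $N$ by assumption. Then $m - n_i \in N \cap \a^i \cd M = F^i(N)$, so $m$ maps onto the given element of $\La_F(N)$. With $F$-adic completeness of $N$ established, Theorem \ref{thm:416} applies directly and yields that $N$ is $\a$-adically complete.
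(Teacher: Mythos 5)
Your proposal is correct and follows essentially the same route as the paper: equip $N$ with the subspace filtration $F^i(N) = N \cap \a^i \cd M$, use closedness of $N$ (together with completeness of $M$) to identify $N$ with $\lim_{\lto i} N / F^i(N)$, and then invoke Theorem \ref{thm:416}. Your element-by-element verification of $F$-adic completeness is just a more explicit unwinding of the paper's identification of the closure of $N$ inside $M = \lim_{\lto i} M / \a^i \cd M$.
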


\begin{proof}
Take any submodule $N$ of $M$. Consider the filtration
$\{ F^i(N) \}_{i \in \N}$ of $N$, $F^i(N) := N \cap \a^i \cd M$.
An element $m \in M$ is in the closure of $N$,
for the $\a$-adic topology, if and only if for every $i$ the image of $m$ in
$M / \a^i \cd M$ belongs to the submodule
$N / F^i(N)$. This means that the closure of $N$ inside
$M = \lim_{\lto i} \msp M / \a^i \cd M$ is
$\lim_{\lto i} \msp N / F^i(N)$.

We are given that the submodule $N$ is closed. Hence
$N = \lim_{\lto i} \msp N / F^i(N)$, i.e.\ it is $F$-adically
complete. Since the $F$-adic topology on $N$ is weaker than its $\a$-adic
topology, we conclude that $N$ is $\a$-adically complete.
\end{proof}

The next is a slight improvement of \cite[Corollary A2]{Og};
again we do not require the set $I$ to be countable.

\begin{cor} \label{cor:422}
With $\a \sub A$ as in the theorem, let
$\{ M_i \}_{i \in I}$ be an inverse system of $A$-modules,
indexed by a directed set $(I, \leq)$.
Assume every $M_i$ is an $\a$-adically complete module. Define the $A$-module
$M := \lim_{\lto i \in I} M_i$. Then $M$ is $\a$-adically complete.
\end{cor}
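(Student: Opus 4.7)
The plan is to produce a filtration $G$ on $M$, indexed by a directed set, so that Theorem \ref{thm:416} applies. The naive choice $F^i(M) := \opn{Ker}(M \to M_i)$ gives $F$-adic completeness for free by Proposition \ref{prop:420}, but it fails the weakness hypothesis: there is no reason for the image of $M$ in $M_i$ to be annihilated by any power of $\a$. The fix is to refine the indexing set to the product directed set $I \times \N$ (with the product order) and define
\[ G^{(i, j)}(M) \ := \ \opn{Ker}\bigl( M \to M_i \msp / \msp \a^j \cd M_i \bigr) . \]

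The weakness hypothesis is then automatic, since $\a^j \cd M$ maps into $\a^j \cd M_i$ and hence lies in $G^{(i, j)}(M)$ for every $(i, j) \in I \times \N$. What remains is to verify that $M$ is $G$-adically complete. By construction $M / G^{(i, j)}(M)$ injects into $M_i / \a^j \cd M_i$, so passing to the inverse limit yields an injection
\[ \La_G(M) \ \inj \ \lim_{\lto (i, j) \in I \times \N} \ M_i \msp / \msp \a^j \cd M_i . \]
The right-hand side is evaluated as an iterated limit: for each fixed $i$ the inner limit $\lim_{\lto j} \msp M_i / \a^j \cd M_i$ equals $M_i$ because $M_i$ is $\a$-adically complete by hypothesis, and then the outer limit over $i$ returns $M$. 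This gives an injection $\La_G(M) \inj M$, and since its composition with $\tau_{M, G} : M \to \La_G(M)$ is the identity on $M$, both arrows are bijective. Theorem \ref{thm:416} applied to $G$ finishes the proof.

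The main conceptual step is recognizing that the naive filtration $F^i(M) := \opn{Ker}(M \to M_i)$ is inadequate for the weakness hypothesis, and refining it to the doubly-indexed $G^{(i, j)}(M)$. Once this choice is made, the two hypotheses of Theorem \ref{thm:416} follow from a routine Fubini-type interchange of inverse limits of modules over the product directed set $I \times \N$, which holds regardless of the cardinality of $I$ and so avoids any sequential argument.
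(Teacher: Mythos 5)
Your proof is correct and follows essentially the same route as the paper: the same filtration by the kernels of $M \to M_i \msp / \msp \a^j \cd M_i$ indexed by the product directed set $I \times \N$, the same verification that the resulting topology is weaker than the $\a$-adic one, and then an appeal to Theorem \ref{thm:416}. The only cosmetic difference is that you verify $G$-adic completeness inline (injection into the iterated limit plus the identity-composition argument), whereas the paper cites its Proposition \ref{prop:420} together with the associativity of inverse limits -- these are the same argument.
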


\begin{proof}
Consider the set $K := I \times \N$, with this quasi-order:
$k_1 = (i_1, j_1) \leq k_2 = (i_2, j_2)$ if
$i_1 \leq i_2$ and $j_1 \leq j_2$. So $(K, \leq)$ is a directed set.
For $k = (i, j) \in K$ define the $A$-module
$M_k := M_i \msp / \msp \a^j \cd M_i$.
We know that
$M_i \cong \lim_{\lto j \in \N} M_{i, j}$ and
$M = \lim_{\lto i \in I} M_{i}$.
By the associativity property of inverse limits, we get
$M \cong  \lim_{\lto k \in K} M_{k}$.
Letting $F^k(M) := \opn{Ker} \bigl( M \to M_k \bigr)$,
Proposition \ref{prop:420} tells us that $M$ is $F$-adically complete.

The $F$-adic topology is weaker that the $\a$-adic topology on $M$, because
$\a^j \cd M \sub \lb F^{(i, j)}(M)$ for all
$i \in i$ and $j \in \N$. According to Theorem \ref{thm:416}, $M$ is
$\a$-adically complete.
\end{proof}

\section{Rings of Differential Operators}
\label{sec:rings-DOs}

In this section we discuss differential operators between adically complete
rings and modules.

We begin by recalling material from \cite[Section 16]{EGA-IV}.
Let $A \to B$ be a homomorphism between nonzero commutative rings.
Given $B$-modules $M$ and $N$, consider the $A$-central $B$-bimodule
$\opn{Hom}_{A}(M, N)$, where the action by $B$ is
$(b_1 \cd \th \cd b_2)(m) := b_1 \cd \th(b_2 \cd m)$
for $b_i \in B$, $m \in M$ and $\th \in \opn{Hom}_{A}(M, N)$.
We refer to the homomorphism $\th$ as an {\em operator}.
The operator $\th$ is said to have {\em differential order} $\leq 0$
if it is $B$-linear. For $k \geq 1$ the definition is inductive:
an operator $\th \in \opn{Hom}_{A}(M, N)$ has
differential order $\leq k$ if for every $b \in B$ the operator
$[b, \th] := b \cd \th - \th \cd b$ has
differential order $\leq k - 1$.
The set of all differential operators of order $\leq k$ is the $A$-submodule
$\opn{Diff}^k_{B / A}(M, N)$ of $\opn{Hom}_{A}(M, N)$, and the set of all
differential operators is
\begin{equation} \label{eqn:440}
\opn{Diff}_{B / A}(M, N) := \bigcup_{k \geq 0} \opn{Diff}^k_{B / A}(M, N)
\sub \opn{Hom}_{A}(M, N)  .
\end{equation}

It is not hard to see that for $B$-modules $M_0$, $M_1$ and $M_2$, composition
is a function
\[  \opn{Diff}^{k_1}_{B / A}(M_0, M_1) \times
\opn{Diff}^{k_2}_{B / A}(M_1, M_2) \to
\opn{Diff}^{k_1 + k_2}_{B / A}(M_0, M_2) . \]
Thus for a single module $M$,
$\opn{Diff}^{}_{B / A}(M, M)$ is a filtered $B$-ring, central over $A$.

For $M = B$ we obtain the ring of differential operators
$\DD_{B / A} := \opn{Diff}^{}_{B / A}(B, B)$,
which comes with the filtration $\{ \DD^k_{B / A} \}_{k \in \N}$.
In this case $\DD^0_{B / A} = B$, but $B$ is not in the center of
$\DD_{B / A}$.

\begin{prop} \label{prop:442}
Let $M$ be a left $\DD_{B / A}$-module. For any operator
$\th \in \DD^k_{B / A}$, the operator
$\th_M \in \opn{End}_A(M)$, $\th_M(m) := \th \cd m$, is a
differential operator of order $\leq k$.
\end{prop}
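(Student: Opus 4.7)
The plan is to prove the proposition by straightforward induction on the differential order $k$, exploiting the fact that the left $\DD_{B/A}$-module structure on $M$ is equivalent to a ring homomorphism $\rho : \DD_{B/A} \to \opn{End}_A(M)$, $\rho(\th) = \th_M$. The point is that this homomorphism respects the two filtrations by order of differential operators.

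For the base case $k=0$, recall $\DD^0_{B/A} = B$ sitting inside $\DD_{B/A}$ as the degree-zero part. For $\th = b \in B$, the operator $\th_M$ is multiplication by $b$ on $M$, using the underlying $B$-module structure. This is $B$-linear by associativity of the module action, so $\th_M \in \opn{Diff}^0_{B/A}(M,M)$.

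For the inductive step, assume the claim for order $\leq k-1$, and let $\th \in \DD^k_{B/A}$. We need to show that for every $b \in B$ the commutator $[b, \th_M] \in \opn{End}_A(M)$ lies in $\opn{Diff}^{k-1}_{B/A}(M,M)$. The key computation is that for any $m \in M$,
\[
[b, \th_M](m) = b \cd (\th \cd m) - \th \cd (b \cd m) = (b \th - \th b) \cd m = [b, \th]_M(m),
\]
where the middle equality uses that $M$ is a left module over the ring $\DD_{B/A}$, which contains $B$ as a subring. Thus $[b, \th_M] = [b, \th]_M$. By the definition of the order filtration on $\DD_{B/A}$ we have $[b, \th] \in \DD^{k-1}_{B/A}$, so by the inductive hypothesis $[b, \th]_M$ has differential order $\leq k-1$. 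Hence $\th_M$ has differential order $\leq k$, completing the induction.

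There is no real obstacle here — the proposition amounts to checking that the inductive definition of differential order transports through any ring action, and the only subtlety is verifying the compatibility $[b, \th_M] = [b, \th]_M$, which is immediate from the associativity of the $\DD_{B/A}$-action on $M$.
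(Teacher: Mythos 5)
Your proof is correct and follows essentially the same route as the paper: induction on $k$, with the base case $\DD^0_{B/A} = B$ giving $B$-linearity, and the inductive step resting on the identity $[b, \th_M] = [b, \th]_M$ together with $[b,\th] \in \DD^{k-1}_{B/A}$. Nothing is missing.
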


\begin{proof}
By induction on $k$. For $k = 0$ we have $\DD^0_{B / A} = B$, so $\th_M$ has
order $\leq 0$. Now take $k \geq 1$. For every $b \in B$ and $m \in M$ we have
\[ [b, \th_M](m) = b \cd \th \cd m - \th \cd b \cd m = [b, \th]_M(m)  \]
in $M$.
Now $[b, \th] \in \DD^{k - 1}_{B / A}$, so by the induction hypothesis
$[b, \th]_M \in \opn{Diff}^{k - 1}_{B / A}(M, M)$.
We conclude that $[b, \th_M]$ is a differential operator of order $\leq k
- 1$ for all $b$. Therefore $\th_M$ is  a differential
operator of order $\leq k$.
\end{proof}

Let $I$ be the kernel of the multiplication homomorphism
$B \ot_A B \to B$. The ring
$\mcal{P}^k_{B / A} := (B \ot_A B) / I^{k + 1}$
is called the {\em module of principal parts of order $k$}.
See \cite[Subsection 16.3]{EGA-IV}.
We view $\mcal{P}^k_{B / A}$ as a left $B$-module by the ring homomorphism
$B \to \mcal{P}^k_{B / A}$, $b \mapsto b \ot 1$.
Then the ring homomorphism
$\d^k : B \to  \mcal{P}^k_{B / A}$, $b \mapsto 1 \ot b$, is a differential
operator of order $k$. It is universal in the following sense:
for $B$ modules $M$ and $N$, the $A$-module homomorphism
\begin{equation} \label{eqn:441}
\opn{Hom}_{B}(\mcal{P}^k_{B / A} \ot_B M, N) \to
\opn{Diff}^k_{B / A}(M, N)\ , \
\phi \mapsto \phi \circ \d^k_M \ ,
\end{equation}
is bijective. Here
$\d^k_M(m) := \d^k(1_B) \ot m \in \mcal{P}^k_{B / A} \ot_B M$.

\begin{lem} \label{lem:440}
Let $\b \sub B$ be an ideal, and let
$\th \in \opn{Diff}^k_{B / A}(M, N)$. Then
$\th(\b^{i + k} \cd M) \sub \b^{i} \cd N$ for all $i \in \N$.
\end{lem}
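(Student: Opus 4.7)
My approach is a double induction, primarily on the differential order $k$, with an inner induction on $i$ for the step. The base case $k = 0$ is immediate: a differential operator of order zero is by definition $B$-linear, and hence sends $\b^i \cd M$ into $\b^i \cd N$ for every $i$.

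For the inductive step I assume the statement for order $k - 1$ (and every $i$, every pair of modules), and aim to prove it for order $k$. The lever is the commutator identity
\[ \th(b \cd m) \;=\; b \cd \th(m) \,-\, [b, \th](m), \]
valid for all $b \in B$ and $m \in M$, together with the fact that by the very definition of differential order, $[b, \th]$ belongs to $\opn{Diff}^{k-1}_{B / A}(M, N)$. I then run an inner induction on $i$ for the operator $\th$ itself. The case $i = 0$ reduces to $\th(\b^k \cd M) \sub N$, which is automatic. Assuming $\th(\b^{i + k} \cd M) \sub \b^i \cd N$, take $y \in \b^{(i + 1) + k} \cd M$; because $\b^{(i + 1) + k} \cd M = \b \cd (\b^{i + k} \cd M)$, it suffices by linearity to treat $y = b \cd x$ with $b \in \b$ and $x \in \b^{i + k} \cd M$. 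The commutator identity then splits $\th(y)$ into two summands: $b \cd \th(x)$ lies in $\b \cd \b^i \cd N = \b^{i + 1} \cd N$ by the inner hypothesis, and $[b, \th](x)$ lies in $\b^{i + 1} \cd N$ by the outer hypothesis applied to $[b, \th] \in \opn{Diff}^{k - 1}_{B / A}(M, N)$ with index $i + 1$, since $x \in \b^{(i + 1) + (k - 1)} \cd M$.

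There is no substantive obstacle; the only thing to take care of is organizing the two inductions so that the commutator identity can be applied cleanly and the exponents on $\b$ line up on both sides.
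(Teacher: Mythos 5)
Your proof is correct and complete: the base case $k=0$ is $B$-linearity, and the double induction works because the identity $\th(b \cd m) = b \cd \th(m) - [b,\th](m)$ splits the problem into the inner hypothesis (same $\th$, index $i$) and the outer hypothesis applied to $[b,\th] \in \opn{Diff}^{k-1}_{B/A}(M,N)$ at index $i+1$, with the exponents matching exactly as you say. Note that the paper itself offers no argument here — it only cites \cite[Proposition 1.4.6]{Ye1} — and your commutator induction is precisely the standard proof behind that reference, so your write-up serves as a self-contained substitute for the citation.
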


\begin{proof}
This is \cite[Proposition 1.4.6]{Ye1}.
\end{proof}

Suppose $M$ and $N$ are $B$-modules, $\b \sub B$ is a finitely generated
ideal, and $\th : M \to N$ is an $A$-linear homomorphism that's continuous for
the $\b$-adic topologies on these modules. Then $\th$ extends uniquely to a
continuous $A$-linear homomorphism
$\wh{\th} : \wh{M} \to \wh{N}$ between the $\b$-adic completions.

\begin{prop} \label{prop:440}
Let $\b \sub B$ be a finitely generated ideal, and let $\wh{B}$ be the
$\b$-adic completion of $B$. Let $M$ and $N$ be $B$-modules,
and let $\th \in \opn{Diff}^k_{B / A}(M, N)$.
\begin{enumerate}
\item The $A$-linear homomorphism $\th : M \to N$ is continuous for the
$\b$-adic topologies.

\item The unique continuous extension
$\wh{\th} : \wh{M} \to \wh{N}$ of $\th$ belongs
to $\opn{Diff}^k_{\wh{B} / A}(\wh{M}, \wh{N})$.
\end{enumerate}
\end{prop}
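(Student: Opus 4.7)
Part (1) is immediate from Lemma \ref{lem:440}: the inclusion $\th(\b^{i + k} \cd M) \sub \b^i \cd N$ says that $\th$ sends a basis of neighborhoods of $0$ in $M$ into a basis of neighborhoods of $0$ in $N$ (shifted by the fixed integer $k$), so $\th$ is continuous for the $\b$-adic topologies, and it extends uniquely to a continuous $A$-linear map $\wh{\th} : \wh{M} \to \wh{N}$.

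For part (2), the plan is to induct on the order $k$. In the base case $k = 0$ the operator $\th$ is $B$-linear, and I would verify that $\wh{\th}$ is $\wh{B}$-linear by a standard density argument: for each fixed $\hat{m} \in \wh{M}$, the two maps $\hat{b} \mapsto \wh{\th}(\hat{b} \cd \hat{m})$ and $\hat{b} \mapsto \hat{b} \cd \wh{\th}(\hat{m})$ are continuous $\wh{B} \to \wh{N}$; they agree on the dense subring $B \sub \wh{B}$ (using the $B$-linearity of $\th$ and the density of $M$ in $\wh{M}$), hence they agree on all of $\wh{B}$.

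For the inductive step $k \geq 1$, I would first observe that for every $b \in B$ the commutator $[b, \th]$ lies in $\opn{Diff}^{k - 1}_{B / A}(M, N)$ and is continuous by part (1). The operators $\wh{[b, \th]}$ and $[b, \wh{\th}]$ are both continuous $A$-linear maps $\wh{M} \to \wh{N}$ that restrict to $[b, \th]$ on the dense subset $M$, so they coincide. The inductive hypothesis then yields $[b, \wh{\th}] = \wh{[b, \th]} \in \opn{Diff}^{k - 1}_{\wh{B} / A}(\wh{M}, \wh{N})$ for every $b \in B$.

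The main obstacle is the passage from $b \in B$ to arbitrary $\hat{b} \in \wh{B}$. I would rephrase the goal via the iterated commutator characterization of order: $\wh{\th}$ is of order $\leq k$ over $\wh{B}$ if and only if the $(k + 1)$-fold iterated commutator $[\hat{b}_0, [\hat{b}_1, \ldots, [\hat{b}_k, \wh{\th}] \cdots ]]$ vanishes for all $\hat{b}_i \in \wh{B}$. Repeated use of the Jacobi identity, together with the commutativity of $\wh{B}$, shows that this iterated commutator is symmetric in its $\hat{b}_i$ arguments; hence a secondary induction on the number of arguments lying outside $B$ — extending them from $B$ to $\wh{B}$ one at a time — reduces the problem to the following density assertion applied at each step: if $\Phi : \wh{M} \to \wh{N}$ is a continuous $A$-linear map that commutes with left multiplication by every $b \in B$, then $\Phi$ commutes with left multiplication by every $\hat{b} \in \wh{B}$. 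This is precisely the density argument already used in the base case, now applied to $\Phi$ in place of $\wh{\th}$.
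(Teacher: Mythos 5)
Your proposal is correct and takes essentially the same route as the paper: the iterated-commutator characterization of order $\leq k$, combined with continuity and the density of $B$ in $\wh{B}$ (and of $M$ in $\wh{M}$). The paper compresses your primary induction on $k$ and your one-argument-at-a-time extension into a single observation — the $(k+1)$-fold commutator, viewed as a continuous multilinear expression in the variables from $\wh{B}$ (evaluated on $\wh{M}$), vanishes on a dense subset and hence vanishes identically — so your extra inductive scaffolding is sound but not needed.
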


\begin{proof}
(1) Clear from Lemma \tup{\ref{lem:440}}.

\medskip \noindent
(2) The operator $\wh{\th}$ is a differential operator of order $\leq k$ iff
$[ \cdots [\wh{\th}, b_0], \ldots, b_k] = 0$
for all $b_0, \ldots, b_k \in \wh{B}$.
This commutator expression is a continuous multilinear function
$\wh{B}^{\msp k + 1} \to \wh{B}$, and it vanishes on the image of
$B^{\msp k + 1}$. Therefore it is zero on $\wh{B}^{\msp k + 1}$.
\end{proof}

\begin{cor} \label{cor:440}
Let $\b \sub B$ be a finitely generated ideal, and let $\wh{B}$ be the
$\b$-adic completion of $B$.
\begin{enumerate}
\item Any differential operator $\th \in \DD^k_{B / A}$
extends uniquely to a differential operator $\wh{\th} \in \DD^k_{\wh{B} / A}$.

\item The function $\th \mapsto \wh{\th}$ is a filtered $A$-ring homomorphism
$\DD_{B / A} \to \DD_{\wh{B} / A}$.
\end{enumerate}
\end{cor}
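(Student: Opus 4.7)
The plan is to derive both parts from Proposition \ref{prop:440} applied in the case $M = N = B$, together with the fact that $B$ is dense in the Hausdorff space $\wh{B}$, which forces uniqueness of any continuous extension. Since $\b$ is finitely generated, $\wh{B}$ is $\b$-adically complete, and in particular separated, by Proposition \ref{prop:421}(1).

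For part (1), fix $\th \in \DD^k_{B / A} = \opn{Diff}^k_{B / A}(B, B)$. Proposition \ref{prop:440}(1) shows that $\th$ is continuous for the $\b$-adic topology, and Proposition \ref{prop:440}(2) produces a unique continuous $A$-linear extension $\wh{\th} : \wh{B} \to \wh{B}$, which belongs to $\opn{Diff}^k_{\wh{B} / A}(\wh{B}, \wh{B}) = \DD^k_{\wh{B} / A}$. To obtain uniqueness of the extension among \emph{all} differential operators (rather than merely among continuous $A$-linear maps), note that any $\phi \in \DD^k_{\wh{B} / A}$ restricting to $\th$ on $B$ is itself $\b \wh{B}$-adically continuous by Proposition \ref{prop:440}(1) applied over $\wh{B}$, so $\phi$ and $\wh{\th}$ are continuous maps into the Hausdorff space $\wh{B}$ that agree on the dense subset $B$, and hence coincide on all of $\wh{B}$.

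For part (2), preservation of the filtration is already built into (1). The $A$-linear and multiplicative properties are formal consequences of this uniqueness: given $\th_1, \th_2 \in \DD_{B / A}$ and $a \in A$, the operators $\wh{\th}_1 + \wh{\th}_2$ and $a \cd \wh{\th}_1$ are continuous $A$-linear extensions of $\th_1 + \th_2$ and $a \cd \th_1$, and must therefore equal $\widehat{\th_1 + \th_2}$ and $\widehat{a \cd \th_1}$ respectively. Similarly, $\wh{\th}_2 \circ \wh{\th}_1$ is a continuous extension of the composition $\th_2 \circ \th_1 \in \DD^{k_1 + k_2}_{B / A}$, so uniqueness forces $\widehat{\th_2 \circ \th_1} = \wh{\th}_2 \circ \wh{\th}_1$. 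The identity of $B$ plainly extends to the identity of $\wh{B}$.

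No step presents a serious obstacle: the substantive content has already been packaged in Proposition \ref{prop:440}, and the remainder is a routine bookkeeping exercise based on the density of $B$ in $\wh{B}$.
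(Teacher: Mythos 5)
Your proposal is correct and follows the same route as the paper: both parts are obtained directly from Proposition \ref{prop:440} applied with $M = N = B$, with part (2) deduced from the uniqueness of the continuous extension (the paper's own proof is exactly this, stated in one line each). Your additional details — separatedness of $\wh{B}$ via Proposition \ref{prop:421}, density of $B$, and the check that any differential-operator extension is automatically continuous — are accurate elaborations of what the paper leaves implicit.
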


\begin{proof}
(1) This is clear from Proposition \ref{prop:440}.

\medskip \noindent
(2) This follows from the uniqueness in (1).
\end{proof}

Now assume $B = A[t_1, \ldots, t_n]$, the polynomial ring in $n$ variables.
$A$ is still an arbitrary nonzero commutative ring.
For a multi-index $\bs{i} = (i_1, \ldots, i_n) \in \N^n$
we write
$\bs{t}^{\bs{i}} := t_1^{i_1} \cdots t_n^{i_n}$
and $\abs{\bs{i}} := i_1 + \cdots + i_n$.
Then (see \cite[Section 1.4]{Ye1}) $\PP^k_{B / A}$ is a free left
$B$-module with basis the collection
$\{ \d^k(\bs{t}^{\bs{i}}) \}_{\abs{\bs{i}} \leq k}$.
From this we get the dual basis
$\{ \th_{\bs{i}} \}_{\abs{\bs{i}} \leq k}$ of $\DD^{k}_{B / A}$.
Namely, the differential operator $\th_{\bs{i}}$ is the unique differential
operator such that
$\th_{\bs{i}}(\bs{t}^{\bs{i}}) = 1$ and
$\th_{\bs{i}}(\bs{t}^{\msp \bs{j}}) = 0$ for $\bs{j} \neq \bs{i}$.
For $k = 1$ we have the partial derivatives
$\pa_i = \pa / \pa_{t_i} \in \DD^{1}_{B / A}$,
which satisfy $\pa_i(t_i) = 1$, and $\pa_i(t_j) = 0$ if $j \neq i$.

If $A$ contains $\Q$, then, due to the uniqueness above, for every
$\bs{i} = (i_1, \ldots, i_n)$ there is equality
\begin{equation} \label{eqn:442}
\th_{\bs{i}} =  (i_1 ! \cdots i_n!)^{-1} \cd
\pa_1^{i_1} \cdots \pa_n^{i_n}  \in \DD_{B / A} .
\end{equation}
It follows that as a left $B$-module, $\DD^{}_{B / A}$ is free with basis the
collection of monomial operators
$\{ \pa_1^{i_1} \cdots \pa_n^{i_n} \}_{\bs{i} \in \N^n}$.
The relations satisfied by the multiplication in $\DD^{}_{B / A}$ are these:
$\pa_i \cd \pa_j = \pa_j \cd \pa_i$, and
$\pa_i \cd b = b \cd \pa_i + \pa_i(b)$ for $b \in B$.

Consider the ideal $\b \sub B$ generated by the variables. The $\b$-adic
completion of $B$ is the ring $\wh{B} = A[[t_1, \ldots, t_n]]$ of power series.
The next proposition describes the structure of the ring
$\DD_{\wh{B} / A}$.

\begin{prop} \label{prop:425}
Let $A$ be a commutative ring containing $\Q$, let
$B := A[t_1, \ldots, t_n]$, and let
$\wh{B} := A[[t_1, \ldots, t_n]]$.
\begin{enumerate}
\item Any differential operator $\th \in \DD_{B / A}$ of order $\leq k$
extends uniquely to a  differential operator
$\wh{\th} \in \DD_{\wh{B} / A}$ of order $\leq k$.

\item The function $\th \mapsto \wh{\th}$ is a filtered ring homomorphism
$\DD_{B / A} \to \DD_{\wh{B} / A}$.

\item The left $\wh{B}$-module homomorphism
$\wh{B} \ot_{B} \DD_{B / A} \to \DD_{\wh{B} / A}$ is bijective.

\item $\DD_{\wh{B} / A}$ is a free left $\wh{B}$-module with basis
$\{ \pa_1^{i_1} \cdots \pa_n^{i_n} \}_{\bs{i} \in \N^n}$.

\item The relations satisfied by the multiplication in $\DD^{}_{\wh{B} / A}$
are these:
$\pa_i \cd \pa_j = \pa_j \cd \pa_i$, and
$\pa_i \cd b = b \cd \pa_i + \pa_i(b)$ for $b \in \wh{B}$.
\end{enumerate}
\end{prop}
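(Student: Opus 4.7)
Parts (1) and (2) are immediate from Corollary \ref{cor:440} applied to the finitely generated ideal $\b \sub B$ generated by the variables $t_1, \ldots, t_n$, whose $\b$-adic completion is $\wh{B}$. Once (3) is proved, part (4) follows from the fact (recalled just before the proposition) that $\DD_{B / A}$ is a free left $B$-module on the monomials in the $\pa_i$'s. Part (5) then also follows: the relations already hold in $\DD_{B / A}$ and transfer via the filtered ring homomorphism from (2), while the Leibniz-type relation $\pa_i \cd b = b \cd \pa_i + \pa_i(b)$ for $b \in \wh{B}$ is obtained from the case $b \in B$ by $\b$-adic continuity, using that every differential operator is continuous by Proposition \ref{prop:440}.

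The heart of the argument is therefore (3). Writing $\pa^{\bs{i}} := \pa_1^{i_1} \cdots \pa_n^{i_n}$, the freeness of $\DD_{B / A}$ over $B$ identifies the source of the map with $\bigoplus_{\bs{i} \in \N^n} \wh{B} \cd \pa^{\bs{i}}$. For injectivity, I suppose a finite sum $\sum_{\bs{i}} \wh{b}_{\bs{i}} \cd \pa^{\bs{i}}$ vanishes as an operator on $\wh{B}$, and apply it to the divided monomial $\bs{t}^{\bs{j}} / \bs{j}!$; using $\pa^{\bs{i}}(\bs{t}^{\bs{j}} / \bs{j}!) = \bs{t}^{\bs{j} - \bs{i}} / (\bs{j} - \bs{i})!$ when $\bs{i} \leq \bs{j}$ and $0$ otherwise, this yields the identity $\sum_{\bs{i} \leq \bs{j}} \wh{b}_{\bs{i}} \cd \bs{t}^{\bs{j} - \bs{i}} / (\bs{j} - \bs{i})! = 0$ in $\wh{B}$ for every $\bs{j}$. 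A short induction on $\abs{\bs{j}}$ then forces $\wh{b}_{\bs{j}} = 0$ for every $\bs{j}$.

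For surjectivity, I would induct on the differential order $k$ of $\th \in \DD^k_{\wh{B} / A}$. The case $k = 0$ is immediate, since $\DD^0_{\wh{B} / A} = \wh{B}$. For $k \geq 1$, each commutator $[\th, t_i]$ has order $\leq k - 1$, so by the inductive hypothesis and the injectivity already proved it has a unique expansion $[\th, t_i] = \sum_{\abs{\bs{j}} \leq k - 1} \wh{c}_{\bs{j}, i} \cd \pa^{\bs{j}}$. Guided by $[\pa^{\bs{j}}, t_i] = j_i \cd \pa^{\bs{j} - \bs{e}_i}$, I set $\wh{b}_{\bs{j}} := \wh{c}_{\bs{j} - \bs{e}_i, i} / j_i$ for any index $i$ with $j_i \geq 1$; the Jacobi identity applied to $\th, t_i, t_{i'}$, combined with $[t_i, t_{i'}] = 0$ and the injectivity of (3) in order $\leq k - 2$, shows this is independent of the chosen $i$. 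A short calculation with the commutator formula then yields $[\th', t_i] = 0$ for every $i$, where $\th' := \th - \sum_{1 \leq \abs{\bs{j}} \leq k} \wh{b}_{\bs{j}} \cd \pa^{\bs{j}}$. Since $\th'$ is $\b$-adically continuous by Proposition \ref{prop:440} and commutes with every $t_i$, it commutes with all of $B$ by $A$-linearity and hence with all of $\wh{B}$ by density; so $\th' \in \DD^0_{\wh{B} / A} = \wh{B}$, completing the expansion of $\th$.

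The principal obstacle is the consistency check in the inductive step, namely that the candidate coefficients $\wh{b}_{\bs{j}}$ extracted from different commutators $[\th, t_i]$ agree. This is the point where the Jacobi identity and the injectivity of (3) at previous orders combine, and it is the only non-routine piece of the argument; everything else is commutator algebra together with density and continuity considerations based on Proposition \ref{prop:440}.
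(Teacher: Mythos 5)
Your proposal is correct, but for the key item (3) it takes a genuinely different route from the paper. The paper's proof is structural: by item (1) there is a canonical identification $\DD^k_{\wh{B}/A} \cong \opn{Diff}^k_{B/A}(B, \wh{B})$ (restrict a continuous operator on $\wh{B}$ to $B$, extend back by continuity), and then the universal property of the principal parts module, $\opn{Diff}^k_{B/A}(B, N) \cong \opn{Hom}_B(\PP^k_{B/A}, N)$, together with the fact that $\PP^k_{B/A}$ is a \emph{finite free} left $B$-module, gives
$\DD^k_{\wh{B}/A} \cong \opn{Hom}_B(\PP^k_{B/A}, \wh{B}) \cong \wh{B} \ot_B \opn{Hom}_B(\PP^k_{B/A}, B) \cong \wh{B} \ot_B \DD^k_{B/A}$,
so injectivity and surjectivity come at once from base change against a finite free module, degreewise in $k$; note that this argument does not use $\Q \sub A$ at all (only item (4), via formula (\ref{eqn:442}), does). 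You instead prove (3) by hand: injectivity by evaluating on divided monomials and inducting on $\abs{\bs{j}}$, and surjectivity by induction on the order $k$, extracting coefficients from the commutators $[\th, t_i]$ via $[\pa^{\bs{j}}, t_i] = j_i \cd \pa^{\bs{j} - \bs{e}_i}$, with the Jacobi identity plus uniqueness of lower-order expansions giving the consistency $j_{i'} \cd \wh{c}_{\bs{j}-\bs{e}_i, i} = j_i \cd \wh{c}_{\bs{j}-\bs{e}_{i'}, i'}$, and then showing the remainder $\th'$ commutes with the $t_i$, hence with $B$, hence with $\wh{B}$ by continuity and density, so $\th' \in \DD^0_{\wh{B}/A} = \wh{B}$. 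This checks out (the division by $j_i$ is legitimate since $\Q \sub A$, and it correctly preserves the order bound $\abs{\bs{j}} \leq k$), and it essentially proves (4) directly along the way; what it costs is a more essential use of characteristic zero and more commutator bookkeeping, while what it buys is a self-contained Weyl-algebra-style argument that avoids the modules of principal parts entirely. Two cosmetic points: the continuity of $\th'$ should be cited from Lemma \ref{lem:440} applied with $\wh{B}$ in place of $B$ (Proposition \ref{prop:440} as stated concerns operators over $B$), and your derivation of the relation $\pa_i \cd b = b \cd \pa_i + \pa_i(b)$ for $b \in \wh{B}$ by density is exactly the paper's continuity argument, so there is no divergence there.
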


\begin{proof}
(1-2) These are special cases of items (1-2) of Corollary \ref{cor:440}.

\medskip \noindent (3)
By item (1) there is a canonical $A$-module isomorphism
\begin{equation} \label{eqn:450}
\DD_{\wh{B} / A} = \opn{Diff}^k_{\wh{B} / A}(\wh{B}, \wh{B}) \cong
\opn{Diff}^k_{B / A}(B, \wh{B}) .
\end{equation}
For every $k$, the left $B$-module
$\PP^k_{B / A}$ is free of finite rank. Therefore, using (\ref{eqn:450}),
we have isomorphisms of $A$-modules
\[ \DD^k_{\wh{B} / A} \cong \opn{Hom}_{B}(\mcal{P}^k_{B / A}, \wh{B})
\cong \wh{B} \ot_B \opn{Hom}_{B}(\mcal{P}^k_{B / A}, B) \cong
\wh{B} \ot_B \DD^k_{B / A} . \]
The composed isomorphism
$\wh{B} \ot_B \DD^k_{B / A} \iso \DD^k_{\wh{B} / A}$
is compatible with the ring homomorphism
$\DD_{B / A} \to \DD_{\wh{B} / A}$ from item (1).

\medskip \noindent (4)
This is clear from item (3) and formula (\ref{eqn:442}).

\medskip \noindent (5)
These relations hold in $\DD_{B / A}$, so by continuity they also hold in
$\DD_{\wh{B} / A}$.
\end{proof}

\begin{cor} \label{cor:425}
In the setting of Proposition \tup{\ref{prop:425}}, let $M$ be a left
$\DD_{B / A}$-module, and let $\wh{M}$ be the $\b$-adic completion of $M$.
Then $\wh{M}$ has a unique left $\DD^{}_{\wh{B} / A}$-module structure, such
that $\tau_M : M \to \wh{M}$ is $\DD_{B / A}$-linear.
\end{cor}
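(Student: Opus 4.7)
The plan is to use the structural description of $\DD_{\wh{B}/A}$ from Proposition \ref{prop:425}(3) together with the extension-by-continuity machinery from Proposition \ref{prop:440}.

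First, for each $\th \in \DD^k_{B/A}$, Proposition \ref{prop:442} tells us that the action map $\th_M \in \opn{End}_A(M)$, $m \mapsto \th \cd m$, lies in $\opn{Diff}^k_{B/A}(M,M)$. By Proposition \ref{prop:440}(1) it is continuous for the $\b$-adic topology on $M$, so it extends uniquely to a continuous $A$-linear map $\wh{\th_M} : \wh{M} \to \wh{M}$; by Proposition \ref{prop:440}(2) this extension belongs to $\opn{Diff}^k_{\wh{B}/A}(\wh{M},\wh{M})$. Using that continuous extensions are unique, the assignment $\th \mapsto \wh{\th_M}$ is seen to respect addition and composition (both sides extend $\th_{1,M} \circ \th_{2,M}$ continuously), so it defines a ring homomorphism $\DD_{B/A} \to \opn{End}_A(\wh{M})$ making $\tau_M$ equivariant.

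Second, I would promote this to an action of $\DD_{\wh{B}/A}$ via Proposition \ref{prop:425}(3), which identifies $\DD_{\wh{B}/A}$ with $\wh{B} \ot_B \DD_{B/A}$ as a left $\wh{B}$-module. I define
\[ (b \ot \th) \cd \xi := b \cd \wh{\th_M}(\xi) \]
for $b \in \wh{B}$, $\th \in \DD_{B/A}$ and $\xi \in \wh{M}$. This is $B$-balanced: if $b' \in B$, then $(b'\th)_M$ is the map $m \mapsto b' \cd \th(m)$, whose continuous extension is $\xi \mapsto b' \cd \wh{\th_M}(\xi)$, so both $(b b') \ot \th$ and $b \ot (b'\th)$ produce $b b' \cd \wh{\th_M}(\xi)$.

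Third, I would check that this really is a module structure and handle uniqueness. Since $\DD_{\wh{B}/A}$ is generated as a ring by $\wh{B}$ and the $\pa_i$'s, subject only to the relations $\pa_i \cd \pa_j = \pa_j \cd \pa_i$ and $\pa_i \cd b = b \cd \pa_i + \pa_i(b)$ for $b \in \wh{B}$ (Proposition \ref{prop:425}(5)), it suffices to verify that these relations hold as $A$-linear endomorphisms of $\wh{M}$. But both relations already hold on $M$ (which is a $\DD_{B/A}$-module), and each side is an $A$-linear operator on $\wh{M}$ that is continuous for the $\b$-adic topology; hence, as they agree on the dense submodule $\tau_M(M)$, they agree on $\wh{M}$. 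The only mildly delicate point — and the main obstacle — is the relation $\pa_i \cd b = b \cd \pa_i + \pa_i(b)$ for $b \in \wh{B}$, which is not literally inherited from $B$; here one observes that left multiplication by $b \in \wh{B}$ is continuous, so both sides are continuous operators on $\wh{M}$ agreeing on $\tau_M(M)$ after reducing to polynomial $b$, and then one passes to the $\b$-adic limit in $b$ as well. Finally, uniqueness is automatic: any $\DD_{\wh{B}/A}$-module structure on $\wh{M}$ for which $\tau_M$ is $\DD_{B/A}$-linear must, by continuity and density of $\tau_M(M)$, act through $\wh{\th_M}$ for each $\th \in \DD_{B/A}$, and then Proposition \ref{prop:425}(3) determines the action of every element of $\DD_{\wh{B}/A}$.
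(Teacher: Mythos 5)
Your proposal is correct and follows essentially the same route as the paper's proof: extend the action of the differential operators (in particular the $\pa_i$) to continuous $A$-linear endomorphisms of $\wh{M}$ via Propositions \ref{prop:442} and \ref{prop:440}, then invoke the Weyl-algebra description of $\DD_{\wh{B}/A}$ from Proposition \ref{prop:425} and verify the relations by continuity and density of $\tau_M(M)$. You simply spell out the well-definedness (via the tensor description in Proposition \ref{prop:425}(3)), the relation $\pa_i \cd b = b \cd \pa_i + \pa_i(b)$ for completed $b$, and the uniqueness in more detail than the paper does.
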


\begin{proof}
Of course $\wh{M}$ is a $\wh{B}$-module.
By  Propositions \ref{prop:442} and \ref{prop:440}, the partial derivatives
$\pa_i$ act continuously on $M$, and hence they extend uniquely to
continuous $A$-linear endomorphisms on $\wh{M}$. By continuity, the relations
in Proposition \ref{prop:425}(5) are satisfied in
$\opn{End}^{\mrm{cont}}_A(M)$. So we get a left
$\DD^{}_{\wh{B} / A}$-module structure on $M$.
\end{proof}

\begin{rem} \label{rem:440}
Let $A \to B$ be a ring homomorphism, $\b \sub B$ a finitely generated ideal,
and $\wh{B}$ the $\b$-adic completion of $B$. We can give a description of
the ring $\DD_{\wh{B} / A}$ as the {\em dir-inv completion} of the ring
$\DD_{B / A}$.

Dir-inv structures where introduced in \cite[Section 1]{Ye2}. In our setting
we put on $\DD_{B / A}$ the dir structure (an ascending filtration)
$\{ \DD^k_{B / A} \}_{k \in \N}$, and on each $\DD^k_{B / A}$ we put the inv
structure (a descending filtration)
$\{ F^i(\DD^k_{B / A}) \}_{i \in \Z}$, where
\[ F^i (\DD^k_{B / A}) :=
\bigl\{ \th \in \DD^k_{B / A} \mid
\th(\b^j) \sub \b^{\opn{max}(j + i, \lsp 0)} \ \
\tup{for all} \ \ j \in \N \bigr\} . \]
The dir-inv completion of $\DD_{B / A}$ is
\[ \La_{\mrm{di}}(\DD_{B / A}) :=
\lim_{k \to} \ \lim_{\lto i} \msp
\DD^k_{B / A} \msp / \msp F^i(\DD^k_{B / A}) .  \]
This completion has a unique $A$-ring structure such that
the diagram of $A$-rings
\[ \begin{tikzcd} [column sep = 10ex, row sep = 6ex]
\DD_{B / A}
\ar[dr]
\ar[d]
\\
\DD_{\wh{B} / A}
\ar[r, "{\simeq}"]
&
\La_{\mrm{di}}(\DD_{B / A})
\end{tikzcd} \]
is commutative, and the horizontal arrow is an isomorphism.

This description of the ring $\DD_{\wh{B} / A}$ can be used to prove a
generalization of Corollary \ref{cor:425} to the setting of this remark.
\end{rem}

\section{Complete \texorpdfstring{$\DD$}{D}-Modules -- the One Dimensional
Case}
\label{sec:one-dim}

This section is devoted to the proof of Theorem \ref{thm:240},
which is the $1$-dimensional case of Theorem \ref{thm:411} from the
introduction. We give a rather detailed proof; but the essential idea
already appears in the prior papers of Ogus and others, see Remark
\ref{rem:460}.

Throughout this section we assume the following:

\begin{setup} \label{set:226}
We are given a commutative ring $A$ containing $\Q$.
Let $B := A[[t]]$, the ring of power series in the variable $t$, and let
$\b \sub B$ be the ideal generated by $t$.
We are also given a left $\DD_{B / A}$-module $M$, which is $\b$-adically
complete as a $B$-module.
\end{setup}

Observe that the ring $A$ is not assumed to be noetherian, and the $B$-module
$M$ is not assumed to be finitely generated.

By the definition of the $\b$-adic topology on $M$, the submodules
$\b^i \cd M$ are open in $M$, and thus -- as always in linearly topologized
modules -- also closed.

As explained in Section \ref{sec:rings-DOs},
the ring of differential operators $\DD_{B / A}$ is the subring
of $\opn{End}_{A}(B)$ generated by $B$ and the derivation
$\pa  := \smfrac{\partial}{\partial t}$.
An operator $\phi \in \DD_{B / A}$ of differential order $\leq i$ has a unique
sum expansion
\begin{equation} \label{eqn:190}
\phi = \sum_{0 \leq j \leq i} \ b_{j} \cd \pa^{\msp j}
\end{equation}
in $\DD_{B / A}$, with coefficients $b_{j} \in B$.
According to Proposition \ref{prop:425}(5), the relation satisfied by the
element $\pa$ in the noncommutative ring $\DD_{B / A}$ is
\begin{equation} \label{eqn:460}
\pa \cd b = b \cd \pa + \pa(b) ,
\end{equation}
where $\pa(b) \in B$ is the result of action
of the operator $\pa$ on an element $b \in B$.
For every $m \in M$ and $b \in B$ we have the equality
\begin{equation} \label{eqn:285}
\pa(b \cd m) = (\pa \cd b) \cd m =
(b \cd \pa + \pa(b)) \cd m = b \cd \pa(m) + \pa(b) \cd m.
\end{equation}
This formula implies that
$\pa(\b^i \cd M) \sub \b^{i - 1} \cd M$, so $\pa$ acts continuously on $M$
for its $\b$-adic topology. By formula (\ref{eqn:190}) we see that every
operator $\phi \in \DD_{B / A}$ acts continuously on $M$.

\begin{dfn} \label{dfn:460} \mbox{}
\begin{enumerate}
\item For every $i \in \N$ define the order $i$ differential operator
\[ \psi_i := (i!)^{-1} \cd (-t)^{i} \cd \pa^i \in \DD_{B / A} . \]

\item For every element $m \in M$ define the element
\[ \psi(m) := \sum_{i \in \N} \msp \psi_i(m) =
\sum_{i \in \N} \msp (i!)^{-1} \cd (-t)^{i} \cd \pa^i(m) \in M .  \]
\end{enumerate}
\end{dfn}

Since $\psi_i(m) \in \b^i \cd M$, the sum in item (2) converges in the
$\b$-adically complete module $M$.
See Remark \ref{rem:271} regarding the possible convergence of the operator sum,
namely making sense of the formula
$\psi = \sum_{i \in \N} \msp \psi_i$.

Recall that the module of horizontal elements of $M$ is
$M^{\mrm{hor}} = \{ m \in M \mid \pa(m) = 0 \}$.

\begin{lem} \label{lem:275}
Under Setup \tup{\ref{set:226}}, the following hold for the function
$\psi : M \to  M$ from Definition \tup{\ref{dfn:460}(2)}.
\begin{enumerate}
\item The function $\psi$ is $A$-linear.

\item $\psi(m) \in M^{\mrm{hor}}$ for every $m \in M$.

\item If $m \in M^{\mrm{hor}}$ then $\psi(m) = m$.

\item $m - \psi(m) \in \b \cd M$ for every $m \in M$.
\end{enumerate}
\end{lem}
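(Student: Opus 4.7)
The plan is to verify the four claims in sequence, with most of the work concentrated in (2). For (1), each $\psi_i$ is an $A$-linear differential operator, and the series defining $\psi(m)$ converges in the $\b$-adically complete module $M$ because $\psi_i(m) \in \b^i \cd M$ and the partial sums are therefore Cauchy. Since addition is continuous for the $\b$-adic topology and each partial sum is $A$-linear, the pointwise limit $\psi$ is $A$-linear as well. Parts (3) and (4) are essentially formal consequences of the definition: if $\pa(m) = 0$ then $\pa^i(m) = 0$ for all $i \geq 1$, so $\psi_i(m) = 0$ for $i \geq 1$ and $\psi(m) = \psi_0(m) = m$; and for arbitrary $m$ we have $m - \psi(m) = -\sum_{i \geq 1} \psi_i(m)$, which lies in $\b \cd M$ since each summand does and $\b \cd M$ is closed.

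The substantive step is (2), where one must check that $\pa\bigl(\psi(m)\bigr) = 0$. My plan is to prove this by a telescoping computation inside $\DD_{B / A}$. Using the commutation relation (\ref{eqn:460}), one computes $\pa \cd (-t)^i = (-t)^i \cd \pa - i \cd (-t)^{i - 1}$, which gives
\[ \pa \cd \psi_i = (i!)^{-1} \cd (-t)^i \cd \pa^{i + 1} - ((i - 1)!)^{-1} \cd (-t)^{i - 1} \cd \pa^i \]
in $\DD_{B / A}$ for $i \geq 1$. Setting $a_i(m) := (i!)^{-1} \cd (-t)^i \cd \pa^{i + 1}(m)$, this reads $\pa(\psi_i(m)) = a_i(m) - a_{i - 1}(m)$ for $i \geq 1$, while $\pa(\psi_0(m)) = \pa(m) = a_0(m)$. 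Hence the partial sums telescope: $\sum_{i = 0}^{N} \pa(\psi_i(m)) = a_N(m)$, and this lies in $\b^N \cd M$, so it converges to zero in the $\b$-adic topology.

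To put this together I would use continuity of $\pa$ on $M$ (established before Definition \ref{dfn:460} via the inclusions $\pa(\b^i \cd M) \sub \b^{i - 1} \cd M$), which allows exchanging $\pa$ with the convergent series:
\[ \pa\bigl( \psi(m) \bigr) = \pa \Bigl( \lim_{N} \sum_{i = 0}^{N} \psi_i(m) \Bigr) = \lim_{N} \sum_{i = 0}^{N} \pa(\psi_i(m)) = \lim_{N} a_N(m) = 0 . \]
The only real obstacle is bookkeeping the telescoping identity accurately; the hypothesis $\Q \sub A$ enters only through the presence of the factors $(i!)^{-1}$ in the definition of $\psi_i$, which are what makes the telescope collapse cleanly.
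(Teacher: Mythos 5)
Your proposal is correct and takes essentially the same route as the paper: parts (1), (3), (4) are handled identically, and the key step (2) is the same computation of $\pa(\psi_i(m))$ via the commutation relation $\pa \cd b = b \cd \pa + \pa(b)$, with your telescoping of partial sums (and the observation that the tail $a_N(m)$ lies in $\b^N \cd M$) being merely a slightly more careful bookkeeping of the cancellation that the paper carries out by reindexing the two convergent series.
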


\begin{proof}
(1) This is because each $\psi_i$ is $A$-linear, and by the pointwise
convergence of the sum
$\psi(m) = \sum_{i \in \N} \msp \psi_i(m)$ in $M$.

\medskip \noindent
(2) Due to the continuity of $\pa$, and using formula (\ref{eqn:285}),
for every $m \in M$ we have
\[ \begin{aligned}
&
\pa(\psi(m)) =
\sum_{i \in \N} \msp \pa \bigl( (i!)^{-1} \cd (-t)^{i} \cd \pa^i(m) \bigr)
\\ &
\qquad = \sum_{i \in \N} \msp
\Bigl( (i!)^{-1} \cd (-t)^{i} \cd \pa^{i + 1}(m) -
(i!)^{-1} \cd i \cd (-t)^{i - 1} \cd \pa^{i}(m) \Bigl)
\\ &
\qquad = \Bigl( \sum_{i \in \N} \msp (i!)^{-1} \cd (-t)^{i} \cd \pa^{i + 1}(m)
\Bigr)
- \Bigl( \sum_{j \in \N} \msp (j!)^{-1} \cd (-t)^{j} \cd \pa^{j + 1}(m) \Bigr)
= 0 .
\end{aligned} \]

\medskip \noindent
(3) Take some $m \in M^{\mrm{hor}}$. Then
$\psi_0(m) = m$, and $\psi_i(m) = 0$ for $i \geq 1$.
Thus $\psi(m) = m$.

\medskip \noindent
(4) Take some element $m \in M$. By definition we have $\psi_0(m) = m$,
and $\psi_i(m) \in \b^i \cd M \sub \b \cd M$ for $i \geq 1$.
Since $\b \cd M$ is closed, we have
$\psi(m) - m = \sum_{i \geq 1}  \psi_i(m) \in \b \cd M$.
\end{proof}

It is not immediately clear that $\psi : M \to M$ is continuous. One can show
it by a messy calculation; but it will be an easy consequence later, see
Corollary \ref{cor:460} below.

Define the $B$-module $\widebar{M} := M \msp / \msp  \b \cd M$, with
canonical projection $\pi : M \surj \widebar{M}$.

\begin{prop}[Existence of a Solution] \label{prop:430}
Under Setup \tup{\ref{set:226}}, given an element
$\widebar{m} \in \widebar{M}$, there exists an element $m \in M$
such that $\pa(m) = 0$ and $\pi(m) = \widebar{m}$.
\end{prop}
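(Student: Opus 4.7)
The proof should be essentially a direct application of Lemma \ref{lem:275}, which already contains all the substantive work. The idea is that the operator $\psi$ of Definition \ref{dfn:460} serves as an explicit ``horizontal projector'': it sends any element to a horizontal element that has the same image in $\widebar{M}$.

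Concretely, the plan is as follows. Since $\pi : M \surj \widebar{M}$ is surjective, choose any lift $m_0 \in M$ with $\pi(m_0) = \widebar{m}$. Define $m := \psi(m_0) \in M$, where $\psi$ is the $A$-linear function from Definition \ref{dfn:460}(2); this is well-defined because the sum $\sum_i (i!)^{-1} (-t)^i \pa^i(m_0)$ converges in the $\b$-adically complete module $M$, as noted just after the definition. By Lemma \ref{lem:275}(2) we have $\pa(m) = \pa(\psi(m_0)) = 0$, so $m$ is horizontal. By Lemma \ref{lem:275}(4), $m_0 - \psi(m_0) \in \b \cd M$, so applying $\pi$ gives $\pi(m) = \pi(\psi(m_0)) = \pi(m_0) = \widebar{m}$. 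Both required conditions are satisfied, which completes the proof.

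There is no real obstacle here: the nontrivial content, namely the verification that $\psi$ is well-defined, $A$-linear, lands in $M^{\mrm{hor}}$, and induces the identity modulo $\b \cd M$, has already been established in Lemma \ref{lem:275}. The essential use of the hypothesis that $A$ contains $\Q$ enters through the factors $(i!)^{-1}$ in the formula for $\psi_i$, and the essential use of $\b$-adic completeness is in the convergence of the defining sum.
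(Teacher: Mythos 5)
Your proposal is correct and coincides with the paper's own proof: lift $\widebar{m}$ to some $m' \in M$, set $m := \psi(m')$, and invoke Lemma \ref{lem:275}(2) for horizontality and Lemma \ref{lem:275}(4) for $\pi(m) = \pi(m') = \widebar{m}$. Nothing further is needed.
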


Later, in Corollary \ref{cor:296}, we shall see that the element $m$ is
unique.

\begin{proof}
Let $m'$ be some lifting of $\widebar{m}$ to $M$; namely $m' \in M$ satisfies
$\pi(m') = \widebar{m}$. Define $m := \psi(m') \in M$, where
$\psi$ is the operator from Definition \ref{dfn:460}(2).
According to Lemma \ref{lem:275}(2) the element $m$ satisfies
$\pa(m) = 0$, and by  Lemma \ref{lem:275}(4) we have
$\pi(m) = \pi(m') = \widebar{m}$.
\end{proof}

Here is a result on induced $\DD_{B / A}$-modules.
Let $L$ be an $A$-module.
The $B$-module $B \ot_A L$ has an induced left $\DD_{B / A}$-module
structure, with action $\pa(b \ot l) = \pa(b) \ot l$
for $b \in B$ and $l \in L$.
Now define the $B$-module
$N := B \msp \wh{\ot}_A \msp L$, the $\b$-adic completion of
$B \ot_A L$ (where $L$ is given the discrete topology).
By continuity, the $B$-module $N$ has a left $\DD_{B / A}$-module
structure.

\begin{lem} \label{lem:281}
For the $B$-module $N = B \msp \wh{\ot}_A \msp L$, with its
induced $\DD_{B / A}$-module structure, the following assertions hold.
\begin{enumerate}
\item Every element $n \in N$ has a unique convergent power series expansion
$n = \lb \sum_{i \geq 0} \, t^i \ot l_i$, with $l_i \in L$.

\item For $n \in N$, with its power series expansion from item \tup{(1)},
there is equality
$\pa(n) = \sum_{i \geq 1} \, i \cd t^{i - 1} \ot l_i$ in $N$.

\item The $A$-module homomorphism
$\al : L \to  N^{\mrm{hor}}$, $\al(l) := 1_B \ot l$, is bijective.
\end{enumerate}
\end{lem}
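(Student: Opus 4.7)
The key preliminary step is to identify the quotients of $N$ by its $\b$-adic filtration. Writing $M_0 := B \ot_A L$, we have $M_0 \msp / \msp \b^k \cd M_0 = (B / \b^k) \ot_A L$, and since $B / \b^k$ is $A$-free with basis $\{1, t, \ldots, t^{k-1}\}$ this equals the finite direct sum $\boplus_{i < k} \, (t^i \ot L)$. By Proposition \ref{prop:421}, applied to the finitely generated ideal $\b$ and the module $M_0$, this same formula computes $N \msp / \msp \b^k \cd N$ for every $k$. From this, part (1) follows: an element $n \in N$ is determined by its compatible family of images $n_k \in N \msp / \msp \b^k \cd N$, each of which has a unique finite expansion $n_k = \sum_{i < k} t^i \ot l_{k, i}$. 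Compatibility forces $l_{k, i}$ to stabilize to some $l_i \in L$ once $k > i$, and the partial sums $\sum_{i < k} t^i \ot l_i$ then converge to $n$ in the $\b$-adic topology on $N$; uniqueness of the expansion follows from uniqueness of each truncation.

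For part (2), the formula $\pa(t^i \ot l) = \pa(t^i) \ot l = i \cd t^{i - 1} \ot l$ on the generators of $M_0$ gives $\pa(\b^k \cd M_0) \sub \b^{k - 1} \cd M_0$, so by Proposition \ref{prop:440} the operator $\pa$ extends uniquely to a continuous $A$-linear operator on $N$. Applying it termwise to the partial sums from part (1) and passing to the limit yields the stated formula; convergence of $\sum_{i \geq 1} i \cd t^{i - 1} \ot l_i$ in $N$ is clear since its $i$-th term lies in $\b^{i - 1} \cd N$.

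For part (3), injectivity of $\al$ is immediate from uniqueness in (1): the expansion of $\al(l) = 1 \ot l$ has $l_0 = l$ and $l_i = 0$ for $i \geq 1$, so $\al(l) = 0$ forces $l = 0$. For surjectivity, take $n = \sum_i t^i \ot l_i \in N^{\mrm{hor}}$; part (2) gives $\sum_{i \geq 1} i \cd t^{i - 1} \ot l_i = 0$, and the uniqueness from part (1) forces $i \cd l_i = 0$ for every $i \geq 1$. Since $A$ contains $\Q$, each positive integer is invertible in $A$, hence $l_i = 0$ for $i \geq 1$ and $n = 1 \ot l_0 = \al(l_0)$. Nothing here is substantively hard once the first paragraph is in place; the only noteworthy point is that the hypothesis $\Q \sub A$ enters precisely in the surjectivity of $\al$, matching its indispensable role in the $1$-dimensional theorem.
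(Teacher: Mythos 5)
Your proposal is correct and follows essentially the same route as the paper's proof: identify the quotients $N \msp / \msp \b^k \cd N$ with the finite direct sums $\boplus_{i < k} \, t^i \ot L$ and pass to the inverse limit for (1), act by $\pa$ termwise using continuity for (2), and read (3) off from the uniqueness of the expansion. The only (harmless) differences are cosmetic: you route the identification of the quotients through Proposition \ref{prop:421} where the paper writes the isomorphism chain directly, and you make explicit the use of $\Q \sub A$ to invert $i$ in part (3), which the paper leaves implicit.
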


\begin{proof}
(1) For $i \in \N$ let $B_i := B / \b^{i + 1}$ and
$N_i := N \msp / \msp \b^{i + 1} \cd N$. Then
$B_i \cong \boplus_{0 \leq j \leq i} \, A \cd t^j$ and
\begin{equation} \label{eqn:287}
N_i \cong B_i \ot_{B} N \cong  B_i \ot_{A} L \cong
\boplus_{0 \leq j \leq i} \, t^j \ot L
\end{equation}
as $A$-modules, and these isomorphisms are compatible with the change of $i$.
The image of $n$ in $N_i$ is uniquely a sum
$\sum_{0 \leq j \leq i} \, t^j \ot l_j$
with $l_j \in L$. In the inverse limit
$N \cong \lim_{\leftarrow i} \, N_i$ we obtain
$n = \sum_{j \geq 0} \, t^j \ot l_j$.

\medskip \noindent
(2) For each $i$ the derivation $\pa$ acts on $t^i \ot l_i$ as follows:
$\pa(t^i \ot l_i) = \pa(t^i) \ot l_i$.
Also $\pa(t^i) = i \cd t^{i - 1}$. By $A$-linearity and continuity we
get the stated expression for $\pa(n)$.

\medskip \noindent
(3) Items (1)-(2) say that $n \in N^{\mrm{hor}}$ iff $l_i = 0$ for all
$i \geq 1$, namely iff $n = t^0 \ot l_0 = \al(l_0)$.
\end{proof}

Now we take the $A$-module
$L := M^{\mrm{hor}}$.
This gives us the complete $B$-module
$N := B \msp \wh{\ot}_A \msp M^{\mrm{hor}}$ and the $B$-module homomorphism
\begin{equation} \label{eqn:430}
\th : N = B \msp \wh{\ot}_A \msp M^{\mrm{hor}} \to M .
\end{equation}

\begin{lem} \label{lem:201}
Consider $N = B \msp \wh{\ot}_A \msp M^{\mrm{hor}}$ as above, with the induced
left $\DD_{B / A}$-module structure.
\begin{enumerate}
\item The $B$-module homomorphism
$\th : N \to M$ is a homomorphism of left $\DD_{B / A}$-modules.

\item The $A$-module homomorphism
$\th^{\mrm{hor}} := \th|_{N^{\mrm{hor}}} : N^{\mrm{hor}} \to
M^{\mrm{hor}}$
is bijective.
\end{enumerate}
\end{lem}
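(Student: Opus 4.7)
The plan is to use the explicit power series description of elements in $N = B \wh{\ot}_A M^{\mrm{hor}}$ provided by Lemma \ref{lem:281}, which reduces both parts to short verifications.

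For part (1), I first recall that $\th$ is the unique continuous $B$-linear extension of the multiplication map $B \ot_A M^{\mrm{hor}} \to M$, $b \ot l \mapsto b \cd l$; such an extension exists because $M$ is $\b$-adically complete and the source map is $\b$-adically continuous. Since $\DD_{B/A}$ is generated over $A$ by $B$ and $\pa$ (Proposition \ref{prop:425}(4)-(5)), and $\th$ is already $B$-linear, what remains is to check $\th \circ \pa = \pa \circ \th$ on $N$. Fixing $n \in N$ with canonical expansion $n = \sum_{i \geq 0} t^i \ot l_i$, $l_i \in M^{\mrm{hor}}$, Lemma \ref{lem:281}(2) gives $\pa(n) = \sum_{i \geq 1} i \cd t^{i-1} \ot l_i$, so by $B$-linearity and continuity of $\th$ one has $\th(\pa(n)) = \sum_{i \geq 1} i \cd t^{i-1} \cd l_i$. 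On the other side, $\pa$ acts continuously on $M$ (as noted after (\ref{eqn:285})), so $\pa(\th(n)) = \sum_{i \geq 0} \pa(t^i \cd l_i)$, and the Leibniz rule (\ref{eqn:285}) combined with $\pa(l_i) = 0$ makes each summand equal to $i \cd t^{i-1} \cd l_i$. The two expressions agree.

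For part (2), I invoke Lemma \ref{lem:281}(3): the map $\al : M^{\mrm{hor}} \to N^{\mrm{hor}}$, $l \mapsto 1_B \ot l$, is an $A$-module isomorphism. Composing with $\th^{\mrm{hor}}$ gives $(\th^{\mrm{hor}} \circ \al)(l) = \th(1_B \ot l) = l$, so $\th^{\mrm{hor}} \circ \al = \opn{id}_{M^{\mrm{hor}}}$. Since $\al$ is already bijective, this forces $\th^{\mrm{hor}}$ to be its inverse, hence bijective.

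I do not anticipate any real obstacle: the lemma is essentially bookkeeping with the explicit description of $N$ from Lemma \ref{lem:281}. The most delicate point is the interchange of $\pa$ with the infinite sum in the calculation for (1), which is justified by the continuity of $\pa$ on the $\b$-adically complete module $M$ together with the $\b$-adic convergence of $\sum_i t^i \cd l_i$ in $M$.
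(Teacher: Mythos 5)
Your proof is correct and follows essentially the same route as the paper: part (1) is verified by checking commutation with $\pa$ on the canonical power series expansion from Lemma \ref{lem:281}(1)--(2), using continuity and $\pa(l_i)=0$, and part (2) follows from the bijectivity of $\al$ in Lemma \ref{lem:281}(3) together with $\th^{\mrm{hor}} \circ \al = \opn{id}$. The justification of the interchange of $\pa$ with the convergent sum via continuity is exactly the point the paper relies on as well.
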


\begin{proof}
(1) It suffices to show that $\th$ respects the action of the derivation
$\pa$. Let's denote by $\pa_M$ and $\pa_N$ the action of $\pa$ on the two
modules.
Take $n \in N$, with power series expansion
$n = \sum_{i \geq 0} \, t^i \ot l_i$, $l_i \in L = M^{\mrm{hor}}$,
see Lemma \ref{lem:281}(1). Since $\th$, $\pa_M$ and $\pa_N$ are continuous,
and $\pa(l_i) = 0$, we get
\[ \th(\pa_N(n)) =
\th \Bigl( \sum_{i \geq 1} \, i \cd t^{i - 1} \ot l_i \Bigr)
= \sum_{l \geq 1} \, \th(i \cd t^{i - 1} \ot l_i) =
\sum_{i \geq 1} \, i \cd t^{i - 1} \cd l_i  \]
and
\[ \pa_N(\th(n)) = \pa_N \Bigl( \sum_{i \geq 0} \, t^{i} \cd l_i \Bigr)
= \sum_{i \geq 1} \, i \cd t^{i - 1} \cd l_i  \ . \]
We see that
$\th \circ \pa_N = \pa_M \circ \th$.

\medskip \noindent
(2) According to Lemma \ref{lem:281}(3) the homomorphism
$\al : L \to N^{\mrm{hor}}$ is bijective.
And $\th^{\mrm{hor}} \circ \al = \opn{id}_L$.
Hence $\th^{\mrm{hor}}$ is
bijective.
\end{proof}

We have the inclusion
$\ep_M : M^{\mrm{hor}} \to M$,
the projection
$\pi_M : M \to \widebar{M} = M \msp / \msp \b \cd M$, and
their composition
$\ga_M := \pi_M \circ \ep_M$.
There are similar homomorphisms for $N$.
These sit in the following commutative diagram of $A$-modules.

\begin{equation} \label{eqn:200}
\begin{tikzcd} [column sep = 9ex, row sep = 6ex]
L
\ar[r, "{\al}", "{\simeq}"']
\ar[rr, bend left = 30, start anchor = north, end anchor = north,
"{\opn{id}_L}", "{\simeq}"']
&
N^{\mrm{hor}}
\ar[r, "{\th^{\mrm{hor}}}", "{\simeq}"']
\ar[d, rightarrowtail, "{\ep_N}"]
\ar[dd, bend right = 35, start anchor = south west, end anchor = north west,
"{\ga_N}"', "{}"]
&
M^{\mrm{hor}} = L
\ar[d, rightarrowtail, "{\ep_M}"']
\ar[dd, bend left = 35, start anchor = south east, end anchor = north east,
"{\ga_M}"]
\\
&
N
\ar[r, "{\th}"]
\ar[d, twoheadrightarrow, "{\pi_N}"]
&
M
\ar[d, twoheadrightarrow, "{\pi_M}"']
\\
&
\widebar{N}
\ar[r, "{\widebar{\th}}"]
&
\widebar{M}
\end{tikzcd}
\end{equation}

Here is the main result of this section.

\begin{thm} \label{thm:240}
Under Setup \tup{\ref{set:226}}, the $\DD_{B / A}$-module homomorphism
\[ \th : B \msp \wh{\ot}_A \msp  M^{\mrm{hor}} \to M \]
is bijective.
\end{thm}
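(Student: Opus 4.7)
The plan is to prove surjectivity and injectivity of $\th$ separately, using the averaging operator $\psi$ of Definition \ref{dfn:460}(2) together with the $\b$-adic completeness of $N := B \msp \wh{\ot}_A \msp M^{\mrm{hor}}$ and $M$; the horizontal identification from Lemma \ref{lem:201}(2) seeds both arguments. For surjectivity I would argue by successive approximation. Given $m \in M$, Proposition \ref{prop:430} furnishes a horizontal lift $\til m_0 \in M^{\mrm{hor}}$ of $\pi_M(m)$, so $n_0 := 1_B \ot \til m_0 \in N$ satisfies $m - \th(n_0) \in \b \cd M = t \cd M$. Writing $m - \th(n_0) = t \cd m_1$ and iterating the construction on $m_1$, one produces a sequence $(n_k)_{k \in \N}$ in $N$ with
\[
m - \th \Bigl( \sum_{k = 0}^{K} t^k \cd n_k \Bigr) \in \b^{K + 1} \cd M
\]
for every $K$. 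Since $t^k \cd n_k \in \b^k \cd N$ and $N$ is $\b$-adically complete, the series $n := \sum_{k \geq 0} t^k \cd n_k$ converges in $N$; the continuity of $\th$ (which follows from its $B$-linearity) together with the $\b$-adic separation of $M$ then force $\th(n) = m$.

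For injectivity, set $K := \opn{Ker}(\th) \sub N$. Since $\th$ is $B$-linear and $M$ is $\b$-adically separated, $K$ is a closed submodule of the $\b$-adically complete module $N$, and Corollary \ref{cor:421} shows that $K$ is itself $\b$-adically complete. The $\DD_{B / A}$-linearity of $\th$ from Lemma \ref{lem:201}(1) means $K$ is stable under $\pa$, so $\psi$ restricts to an $A$-linear endomorphism $\psi_K$ of $K$ whose image lies in $K^{\mrm{hor}} = K \cap N^{\mrm{hor}}$. By Lemma \ref{lem:201}(2), $\th$ is injective on $N^{\mrm{hor}}$, so $K^{\mrm{hor}} = 0$ and $\psi_K$ vanishes identically. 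Applying Lemma \ref{lem:275}(4) inside the complete $\DD$-module $K$ yields $k = k - \psi_K(k) \in \b \cd K$ for every $k \in K$, so $K = \b \cd K$; iterating gives $K \sub \b^i \cd K \sub \b^i \cd N$ for all $i$, and the $\b$-adic separation of $N$ forces $K = 0$.

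The main obstacle is ensuring that the $\psi$-identity $k - \psi(k) \in \b \cd K$ can be applied \emph{inside} the kernel $K$: both the convergence of the defining series for $\psi_K$ in $K$ and the closedness of $\b \cd K$ in $K$ rely on $K$ being $\b$-adically complete. Without the Ogus-type Corollary \ref{cor:421} one would only recover $K \sub \b \cd N$, which cannot be iterated into vanishing; it is precisely this stability of completeness under the formation of closed submodules that makes the argument go through.
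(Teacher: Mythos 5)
Your proof is correct, and it diverges from the paper's in an interesting way, chiefly in the injectivity step. For surjectivity you and the paper both start from Proposition \ref{prop:430} (horizontal lifting modulo $\b$) and Lemma \ref{lem:201}(2); the paper then simply invokes the Complete Nakayama Theorem of \cite[Theorem 1.10]{Ye4}, whereas you prove that step by hand via successive approximation, using $B$-linearity (hence continuity) of $\th$, completeness of $N$ and separatedness of $M$ -- a self-contained substitute for the citation, and all the small points you need ($\b^{K+1} N$ closed, $\bigcap_i \b^i M = 0$) do hold. For injectivity the paper argues elementwise: given $n \in \opn{Ker}(\th)$ nonzero, it uses the explicit power-series description of $N$ (Lemma \ref{lem:281}) to extract the $\b$-adic order $i_0$, applies $\pa^{i_0}$ and then $\psi_N$ to manufacture a nonzero horizontal element whose image under $\th$ is simultaneously nonzero and zero. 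You instead treat $K := \opn{Ker}(\th)$ globally: it is a closed $\DD_{B/A}$-submodule of $N$ (closedness needs exactly the separatedness of $M$ and continuity of $\th$, as you say), so Corollary \ref{cor:421} makes $K$ itself a $\b$-adically complete $\DD$-module, Lemma \ref{lem:275} applies to $K$, and since $K^{\mrm{hor}} = K \cap N^{\mrm{hor}} = 0$ by Lemma \ref{lem:201}(2) you get $K = \b \cd K$ and hence $K = 0$ by separatedness. (The one implicit point -- that $\psi$ computed inside $K$ agrees with $\psi$ computed in $N$, so that $\psi_N(k) \in K$ -- follows from continuity of the inclusion $K \to N$ and separatedness of $N$, and you correctly identify completeness of $K$ as the crux.) The trade-off: the paper's one-dimensional proof needs the structural Lemma \ref{lem:281} but none of the Ogus-type completeness results of Section \ref{sec:ad-comp}, which it saves for the induction in Theorem \ref{thm:215}; your argument dispenses with the power-series bookkeeping in the injectivity step but imports Corollary \ref{cor:421} already at dimension one. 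Both are valid, and yours is arguably more structural, at the cost of heavier input.
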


\begin{proof}
The proof is divided into two steps.
As before we write
$N := B \msp \wh{\ot}_A \msp  M^{\mrm{hor}}$.

\smallskip \noindent
Step 1. In this step we prove that $\th : N \to M$ is surjective.

According to Proposition \ref{prop:430}, the homomorphism
$\ga_M : M^{\mrm{hor}} \to \widebar{M}$ is surjective.
By Lemma \ref{lem:201}(2) we know that
$\th^{\mrm{hor}} : N^{\mrm{hor}} \to M^{\mrm{hor}}$
is bijective. It follows that the homomorphism
$\pi_M \circ \th : N \to \widebar{M}$
is surjective. Since both $B$-modules $M$ and $N$ are $\b$-adically complete,
the Complete Nakayama Theorem \cite[Theorem 1.10]{Ye4} says that
$\th : N \to M$ is surjective.

\medskip \noindent
Step 2. In this step we prove that $\th : N \to M$ is injective, and hence it
is bijective.

Assume, for the sake of arriving at a contradiction, that $\th : N \to M$ is
not injective. Then there exists some nonzero element $n \in N$ such that
$\th(n) = 0$. The element $n$ has a unique convergent power series
expansion
$n = \sum_{i \geq 0} \, t^i \ot l_i$,
with $l_i \in L = M^{\mrm{hor}}$; see Lemma \ref{lem:281}(1).

Since $n$ is nonzero, its $\b$-adic order is finite, say $i_0 \in \N$. So in
the power series expansion of $n$ we have
$l_i = 0$ for all $i < i_0$, and $l_{i_0} \neq 0$.
Define $n' = \pa^{i_0}(n) \in N$.
Let $n' = \sum_{i \geq 0} \, t^i \cd l'_i$ be the power series expansion of
$n'$, with $l'_i \in L$. According to Lemma \ref{lem:281}(2)
we have $l'_0 = i_0 ! \cd l_{i_0}$, which is a nonzero element of
$L = M^{\mrm{hor}}$.
This means that $n' \notin \b \cd N$.

Let $\psi_N$ be the idempotent operator from Definition \ref{dfn:460}(2),
but for the complete $\DD_{B / A}$-module $N$.
Define the element
$n'' := \psi_N(n') \in N^{\mrm{hor}}$.
By Lemma \ref{lem:275}(4) we have $n' - n'' \in  \b \cd N$,
so $n'' \notin \b \cd N$, and hence $n''$ is a nonzero element of
$N^{\mrm{hor}}$.
By Lemma \ref{lem:201}(2) the element
$\th(n'') = \th^{\mrm{hor}}(n'') \in M^{\mrm{hor}}$
is nonzero.

On the other hand, we are assuming that $\th(n) \in M$ is zero. Since the
homomorphism $\th$ is $\DD_{B / A}$-linear, see Lemma \ref{lem:201}(1), we
get
$\th(n') = \th(\pa^{i_0}(n)) = \pa^{i_0}(\th(n)) = 0$.
For every $j$ the operator $\psi_j$ belongs to $\DD_{B / A}$, and thus
$\th(\psi_j(n')) = \psi_j(\th(n')) = 0$.
But $\th$ is continuous, so
\[ \th(n'')  = \th(\psi_N(n')) =
\th \Bigl( \sum\nolimits_{j \geq 0} \, \psi_j(n') \Bigr)
= \sum\nolimits_{j \geq 0} \, \th(\psi_j(n')) = 0 . \]
This contradicts the calculation in the previous paragraph.
\end{proof}

\begin{cor} \label{cor:460}
Under Setup \tup{\ref{set:226}}, the following hold:
\begin{enumerate}
\item There is an $A$-linear direct sum decomposition
$M = M^{\mrm{hor}} \oplus \b \cd M$.

\item The homomorphism $\psi : M \to M$ is the $A$-linear projection
onto $M^{\mrm{hor}}$ in the decomposition in item \tup{(1)}.
In particular, $\opn{Ker}(\psi) = \b \cd M$.

\item The homomorphism $\psi : M \to M$ is continuous for the $\b$-adic topology
of $M$.
\end{enumerate}
\end{cor}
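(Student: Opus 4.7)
The plan is to leverage the $\DD_{B / A}$-linear isomorphism $\th : N \to M$ from Theorem~\ref{thm:240}, where $N := B \wh{\ot}_A M^{\mrm{hor}}$, to transfer a direct sum decomposition from $N$ to $M$, and then extract (2) and (3) from the basic properties of $\psi$ already recorded in Lemma~\ref{lem:275}.

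For (1), I would work first inside $N$, using the unique convergent power series expansion from Lemma~\ref{lem:281}(1): every $n \in N$ writes uniquely as $n = \sum_{i \geq 0} t^i \ot l_i$ with $l_i \in L = M^{\mrm{hor}}$. The degree-zero term $1 \ot l_0$ lies in $N^{\mrm{hor}}$ by Lemma~\ref{lem:281}(3), while the tail $\sum_{i \geq 1} t^i \ot l_i$ lies in $\b \cd N$. Uniqueness of the expansion yields $N = N^{\mrm{hor}} \oplus \b \cd N$. Since $\th$ is a $B$-linear bijection with $\th(N^{\mrm{hor}}) = M^{\mrm{hor}}$ (Lemma~\ref{lem:201}(2)) and $\th(\b \cd N) = \b \cd M$, this decomposition transports to $M = M^{\mrm{hor}} \oplus \b \cd M$.

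For (2), I would combine parts of Lemma~\ref{lem:275}. Given $m \in M$, write $m = m_h + m_b$ with $m_h \in M^{\mrm{hor}}$ and $m_b \in \b \cd M$ as in (1). By Lemma~\ref{lem:275}(2) the value $\psi(m)$ lies in $M^{\mrm{hor}}$, and by Lemma~\ref{lem:275}(3) we have $\psi(m_h) = m_h$. From Lemma~\ref{lem:275}(4), $m_b - \psi(m_b) \in \b \cd M$, so $\psi(m_b) \in \b \cd M$ as well, hence $\psi(m_b) \in M^{\mrm{hor}} \cap \b \cd M = 0$ by the directness of (1). Therefore $\psi(m) = m_h$, which is exactly the projection onto $M^{\mrm{hor}}$, with kernel $\b \cd M$. $A$-linearity is already Lemma~\ref{lem:275}(1).

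For (3), continuity follows immediately from (2): since $\psi$ vanishes on $\b \cd M$, we have $\psi(\b^j \cd M) = 0 \sub \b^k \cd M$ for every $k \in \N$ and every $j \geq 1$, so $\psi$ is continuous in the $\b$-adic topology on $M$. I do not anticipate any serious obstacle; the whole corollary is a formal consequence of Theorem~\ref{thm:240} once the direct sum decomposition in (1) is in hand, the only real bookkeeping being the verification that $\th$ carries the decomposition of $N$ cleanly onto that of $M$.
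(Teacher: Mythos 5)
Your proposal is correct and follows essentially the same route as the paper: part (1) transports the evident decomposition of $N = B \msp \wh{\ot}_A \msp M^{\mrm{hor}}$ (constant term versus tail, i.e.\ $B = A \oplus \b$) through the isomorphism $\th$ of Theorem \ref{thm:240}, and parts (2) and (3) are deduced from Lemma \ref{lem:275} together with the fact that $\opn{Ker}(\psi) = \b \cd M$ is open. The only difference is cosmetic: you phrase the decomposition of $N$ via the power series expansion of Lemma \ref{lem:281}(1) rather than via the splitting $B = A \oplus \b$, which amounts to the same thing.
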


\begin{proof}
(1) The $A$-module $B$ decomposes into $B = A \oplus \b$.
This gives a decomposition of $N := B \msp \wh{\ot}_A \msp  M^{\mrm{hor}}$ into
$N = M^{\mrm{hor}} \oplus \b \msp \wh{\ot}_A \msp M^{\mrm{hor}}$.
The isomorphism $\th$ in the theorem carries this direct sum decomposition to
$M$.

\medskip \noindent
(2) By Lemma \ref{lem:275}(2-3) we know that $\psi$ is a projection (an
idempotent operator) with image $M^{\mrm{hor}}$, and by
Lemma \ref{lem:275}(4) we know that $\opn{Ker}(\psi) \sub \b \cd M$.
This forces the equality $\opn{Ker}(\psi) = \b \cd M$.

\medskip \noindent
(3) The kernel of $\psi$ is the open $A$-submodule $\b \cd M$.
\end{proof}

\begin{rem} \label{rem:460}
The projection operator $\psi$ that we use here is the same as the operator
called $P$ in formula (8.9.5) of \cite{Ka}, and in formula (3) of \cite{Og}.
We thank R. K\"{a}llstrom for clarifying this to us.
\end{rem}

\begin{rem} \label{rem:271}
By definition, for every $m \in M$ we have
$\psi(m) = \sum_{i \geq } \psi_i(m)$.
Each $\psi_i$ is a differential operator; but the operator $\psi$
is merely continuous.

It seems plausible that after endowing the ring
$\opn{End}^{\mrm{cont}}_{A}(M)$
of continuous $A$-linear endomorphisms of $M$ with a suitable topology,
there will be equality
$\psi = \sum_{i \geq 0} \, \psi_i$
inside $\opn{End}^{\mrm{cont}}_{A}(M)$.
Moreover, if such a topology can be produced, then the topological closure
$\what{\DD}_{B / A}$ of
$\DD_{B / A}$ inside $\opn{End}^{\mrm{cont}}_{A}(M)$
will be some sort of ring of "pro-differential operators", containing
the projection operator $\psi$.
It is conceivable that the suitable topology on
$\opn{End}^{\mrm{cont}}_{A}(M)$
will make it into a {\em semi-topo\-logical ring}, in the sense of
\cite[Section 1.2]{Ye1}.
\end{rem}

\section{Complete \texorpdfstring{$\DD$}{D}-Modules -- the High Dimensional
Case}
\label{sec:high-dim}

In this section we prove Theorem \ref{thm:215}, which is Theorem \ref{thm:411}
in the Introduction.
As already mentioned, it is a rephrasing, and an improvement to the
non-noetherian setting, of \cite[Theorem 1.3]{Og}; which itself is an
improvement of \cite[Proposition 8.9]{Ka}.

Let us recall a few facts from Sections \ref{sec:ad-comp} and
\ref{sec:rings-DOs}. Consider a commutative ring $A$
containing $\Q$. There are no finiteness assumptions on $A$. Let
$B := A[[t_1, \ldots, t_n]]$, the ring of power series over $A$ in
$n$ variables. The ring $\DD_{B / A}$ of $A$-linear differential operators of
$B$ is generated by $B$ and the partial derivatives
$\pa_i = \pa / \pa_{t_i}$.
Given a left  $\DD_{B / A}$-module $M$, its horizontal submodule is the
$A$-module
\[ M^{\mrm{hor}} = \{ \lsp m \in M \mid \pa_i(m) = 0
\ \, \tup{for all} \ \, i \lsp \} \sub M . \]

If $L$ is an $A$-module, then the $B$-module
$M := B \msp \wh{\ot}_A \msp L$ has an
induced left $\DD_{B / A}$-module structure.

\begin{thm} \label{thm:215}
Let $A$ be a commutative ring containing $\Q$,
let $B := A[[t_1, \ldots, t_n]]$, the ring of power series over $A$ in
$n$ variables, and let $\b \sub B$ be the ideal generated by the variables.
Let $M$ be a left $\DD_{B / A}$-module, which is $\b$-adically complete
as a $B$-module. Then the canonical $\DD_{B / A}$-module homomorphism
\[ \th :  B \ \wh{\ot}_A \ M^{\mrm{hor}} \to M   \]
is bijective.
\end{thm}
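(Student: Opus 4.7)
My approach is to induct on $n$; the base case $n = 1$ is Theorem \ref{thm:240}. For the inductive step, set $A' := A[[t_1, \ldots, t_{n-1}]]$, so that $B = A'[[t_n]]$, and let $\b' \sub A'$ denote the ideal generated by $t_1, \ldots, t_{n-1}$. Since $(t_n) \sub \b$, Corollary \ref{cor:420} tells us that $M$ is $(t_n)$-adically complete. Restriction of scalars makes $M$ into a left $\DD_{A'[[t_n]] / A'}$-module, and applying the one-variable Theorem \ref{thm:240} over the base ring $A'$ yields a canonical isomorphism
\[ A'[[t_n]] \msp \wh{\ot}_{A'} \msp L \iso M , \]
where $L := M^{\pa_n} = \opn{Ker}(\pa_n : M \to M)$. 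Since $\pa_1, \ldots, \pa_{n-1}$ commute with $\pa_n$, they preserve $L$ and endow it with a left $\DD_{A' / A}$-module structure, and the plan is then to apply the inductive hypothesis to $L$.

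The main obstacle is verifying the hypothesis that $L$ is $\b'$-adically complete as an $A'$-module. My argument goes as follows. First, since $\b' \sub \b$, Corollary \ref{cor:420} gives that $M$ is $\b'$-adically complete as an $A$-module, and hence also as an $A'$-module by restriction of scalars. Next, $\pa_n$ is $A'$-linear (because it annihilates $A'$) and continuous for the $\b'$-adic topology on $M$ (because it commutes with multiplication by each $t_i$ for $i < n$, so it preserves each $(\b')^k \cd M$). Therefore its kernel $L$ is a closed $A'$-submodule of $M$, and Corollary \ref{cor:421} then yields that $L$ is $\b'$-adically complete.

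Finally, the inductive hypothesis applied to the $\DD_{A' / A}$-module $L$ gives $A' \msp \wh{\ot}_A \msp L^{\mrm{hor}} \iso L$, and $L^{\mrm{hor}} = M^{\mrm{hor}}$ by construction. Chaining the isomorphisms,
\[ M \ \cong \ B \msp \wh{\ot}_{A'} \msp L \ \cong \
B \msp \wh{\ot}_{A'} \msp \bigl( A' \msp \wh{\ot}_A \msp M^{\mrm{hor}} \bigr)
\ \cong \ B \msp \wh{\ot}_A \msp M^{\mrm{hor}} , \]
where the last identification is obtained by matching multi-variable power series expansions on both sides. Verifying that this composition agrees with the canonical map $\th$ of the theorem is then routine, since each constituent arrow is canonical.
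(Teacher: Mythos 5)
Your proposal is correct and follows essentially the same route as the paper: induction on $n$, peeling off the last variable to apply the one-variable Theorem \ref{thm:240} over $A' = A[[t_1,\ldots,t_{n-1}]]$, using Corollary \ref{cor:420} for $(t_n)$- and $\b'$-adic completeness, Corollary \ref{cor:421} for the closed kernel $L = \opn{Ker}(\pa_n)$, the inductive hypothesis applied to $L$ with $L^{\mrm{hor}} = M^{\mrm{hor}}$, and associativity of completed tensor products to conclude. (Only a cosmetic slip: $\b'$ is an ideal of $A'$, not of $A$, so the completeness statement should be read via the extended ideal $\b' B$, exactly as in the paper.)
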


\begin{proof}
As done in \cite{Be} and \cite{Bj}, our proof is by induction on $n$. For
$n = 1$ this is Theorem \ref{thm:240}.

Now assume that $n \geq 2$, and that the theorem is true for the ring
$B' := A[[t_1, \ldots, t_{n - 1}]]$. Writing $t := t_n$, we have an $A$-ring
isomorphism $B \cong B'[[t]]$. Also, writing
$\pa := \pa_n \in \DD_{B / A}$, the operator $\pa$ commutes with the operators
$t_i$ and $\pa_i$, for all $1 \leq i \leq n - 1$, inside the noncommutative
ring $\DD_{B / A}$.
The ring $\DD_{B / B'}$ is generated by $B$ and $\pa$.

For a subsequence $(i_1, \ldots, i_k)$ of $(1, \ldots, n)$ let
\[ M^{\pa_{i_1}, \ldots, \pa_{i_k}} :=
\{ m \in M \mid \pa_{i_1}(m) = \cdots = \pa_{i_k}(m) = 0 \} \sub M . \]
Let us define the $B'$-module  $M' := M^{\pa} = M^{\pa_n} \sub M$.
The commutation relations mentioned above imply that $M'$ is actually a
$\DD_{B' / A}$-submodule of $M$.
By definition we have
$M^{\mrm{hor}} = M^{\pa_1, \ldots, \pa_{n}}$,
and hence
$M^{\mrm{hor}} = (M')^{\pa_1, \ldots, \pa_{n - 1}}$.

The ideal $(t) \sub B$ is contained in the ideal $\b$, and $M$ is $\b$-adically
complete. Corollary \ref{cor:420}
tells us that $M$ is $(t)$-adically complete.
Applying Theorem \ref{thm:240} to the ring homomorphism
$B' \to B = B'[[t]]$, and the $(t)$-adically complete
$\DD_{B / B'}$-module $M$, we conclude that the
$\DD_{B / B'}$-module homomorphism
$B \msp \wh{\ot}_{B'} \msp M' \to M$
is  bijective.

Let $\b' \sub B'$ be the ideal $(t_1, \ldots, t_{n - 1})$.
Using Corollary  \ref{cor:420} once more, we know that $M$ is $\b'$-adically
complete. Now $M' = \opn{Ker}(\pa)$, so it is a closed submodule of $M$ for the
$\b'$-adic topology. According to Corollary \ref{cor:421}, $M'$ is a
$\b'$-adically complete $B'$-module.
Since Theorem \ref{thm:215} is assumed to be true for $B'$, and
since $(M')^{\pa_1, \ldots, \pa_{n - 1}} = M^{\mrm{hor}}$,
it follows that the
$\DD_{B' / A}$-module homomorphism
$B' \msp \wh{\ot}_{A} \msp M^{\mrm{hor}} \to M'$
is bijective.

The associativity of complete tensor products implies that there are $B$-module
isomorphisms
\[ B \msp \wh{\ot}_{A} \msp M^{\mrm{hor}} \cong
B \msp \wh{\ot}_{B'} \msp (B' \msp \wh{\ot}_{A} \msp M^{\mrm{hor}})
\cong B \msp \wh{\ot}_{B'} \msp M' \cong M . \]
The composed isomorphism is precisely
$\th : B \msp \wh{\ot}_{A} \msp M^{\mrm{hor}} \to M$.
\end{proof}

In the corollaries below we assume the setting of the theorem.

Define the $A$-module
$\widebar{M} := M / \b \cd M$.
Consider the inclusion $\ep : M^{\mrm{hor}} \inj M$ and the projection
$\pi : M \surj \widebar{M}$. Let
$\ga : M^{\mrm{hor}} \to \widebar{M}$
be $\ga := \pi \circ \ep$.

\begin{cor} \label{cor:295}
The $A$-linear homomorphism
$\ga : M^{\mrm{hor}} \to \widebar{M}$ is bijective.
\end{cor}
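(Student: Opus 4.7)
The plan is to deduce the corollary directly from Theorem \ref{thm:215} by reducing the isomorphism $\th$ modulo $\b$.

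First I would note that by construction, $\th: B \wh{\ot}_A M^{\mrm{hor}} \to M$ is a $B$-linear (in fact $\DD_{B/A}$-linear) homomorphism sending $1 \ot m$ to $m$ for every $m \in M^{\mrm{hor}}$; this is clear because $\th$ extends the natural inclusion $M^{\mrm{hor}} \inj M$ via the $B$-action. Setting $N := B \wh{\ot}_A M^{\mrm{hor}}$, Theorem \ref{thm:215} tells us $\th$ is bijective, so it induces an isomorphism of quotients
\[ \widebar{\th} : N / \b \cd N \iso M / \b \cd M = \widebar{M} . \]

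Next I would identify the left-hand side with $M^{\mrm{hor}}$. Since $\b$ is finitely generated and $N$ is the $\b$-adic completion of $B \ot_A M^{\mrm{hor}}$, Proposition \ref{prop:421}(2) gives
\[ N / \b \cd N \cong (B \ot_A M^{\mrm{hor}}) / (\b \ot_A M^{\mrm{hor}})
\cong (B/\b) \ot_A M^{\mrm{hor}} = A \ot_A M^{\mrm{hor}} = M^{\mrm{hor}} , \]
with the composed isomorphism sending the class of $1 \ot m$ to $m$.

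Finally, the composition of this identification with $\widebar{\th}$ sends $m \in M^{\mrm{hor}}$ to the class of $\th(1 \ot m) = m$ in $\widebar{M}$, which is precisely $\ga(m)$. Hence $\ga$ coincides with the isomorphism $\widebar{\th}$ transported across the identification above, and is therefore bijective. There is no real obstacle: the content of the corollary is entirely contained in Theorem \ref{thm:215} once one reduces modulo $\b$ and uses Proposition \ref{prop:421}(2) to commute the quotient past the completion.
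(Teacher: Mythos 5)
Your proof is correct, but it routes the deduction from Theorem \ref{thm:215} slightly differently than the paper does. The paper observes that the statement is obvious for the induced module $N = B \msp \wh{\ot}_A \msp M^{\mrm{hor}}$ (where the horizontal submodule is exactly $1 \ot M^{\mrm{hor}}$ and $\widebar{N} \cong M^{\mrm{hor}}$, as in Lemma \ref{lem:281} and diagram (\ref{eqn:200}) in the one-variable case), and then transports the bijectivity of $\ga_N$ to $\ga_M$ along the $\DD_{B/A}$-module isomorphism $\th : N \iso M$, using that a $\DD$-linear isomorphism preserves horizontal parts and $\b$-multiples. You instead never compute $N^{\mrm{hor}}$: you reduce the isomorphism $\th$ modulo $\b$, and identify $N / \b \cd N$ with $M^{\mrm{hor}}$ by combining Proposition \ref{prop:421}(2) (completion with respect to the finitely generated ideal $\b$ does not change the quotient modulo $\b$) with right-exactness of the tensor product, then check that the composite is $\ga$ because $\th(1 \ot m) = m$. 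This only uses $B$-linearity of $\th$, so it is marginally more economical than the paper's argument, at the cost of invoking the completion fact; both are immediate consequences of the theorem. One cosmetic point: the submodule you quotient by should be written as $\b \cd (B \ot_A M^{\mrm{hor}})$, i.e.\ the image of $\b \ot_A M^{\mrm{hor}}$ in $B \ot_A M^{\mrm{hor}}$ (the map from $\b \ot_A M^{\mrm{hor}}$ need not be injective), but the resulting quotient is $(B/\b) \ot_A M^{\mrm{hor}} \cong M^{\mrm{hor}}$ exactly as you state.
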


\begin{proof}
This is obvious for the induced module
$N := B \msp \wh{\ot}_A \msp M^{\mrm{hor}}$.
According to Theorem \ref{thm:240} there is an isomorphism of left
$\DD_{B / A}$-modules $N \cong M$, so the same holds for $M$.
\end{proof}

In view of Corollary \ref{cor:295}, we can define the $A$-linear bijection
$\si : \widebar{M} \iso M^{\mrm{hor}}$ to be the inverse of $\ga$.
We then have:

\begin{cor} \label{cor:296}
Given an element $\widebar{m} \in \widebar{M}$, the element
$m := \si(\widebar{m}) \in M$ is the unique solution in $M$ of the differential
equation
$\pa_1(m) = \cdots = \pa_n(m) = 0$ with initial condition
$\pi(m) = \widebar{m}$.
\end{cor}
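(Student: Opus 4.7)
The plan is to observe that Corollary \ref{cor:296} is essentially a restatement of Corollary \ref{cor:295} in terms of differential equations, obtained by unwinding the definitions of $M^{\mrm{hor}}$, $\ep$, $\pi$, and $\ga$.

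First I would verify existence: take $m := \si(\widebar{m})$. By construction $\si$ lands in $M^{\mrm{hor}}$, so $m \in M^{\mrm{hor}}$, which by the definition of $M^{\mrm{hor}}$ in (\ref{eqn:410}) means $\pa_1(m) = \cdots = \pa_n(m) = 0$. The initial condition is the identity $\pi(m) = \pi(\ep(\si(\widebar{m}))) = \ga(\si(\widebar{m})) = \widebar{m}$, using that $\ga = \pi \circ \ep$ and $\ga \circ \si = \opn{id}_{\widebar{M}}$ by Corollary \ref{cor:295}.

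Next I would show uniqueness. Suppose $m' \in M$ is any element satisfying $\pa_i(m') = 0$ for all $i = 1, \ldots, n$ and $\pi(m') = \widebar{m}$. Then by definition of $M^{\mrm{hor}}$ we have $m' \in M^{\mrm{hor}}$, so $\ep(m') = m'$ and consequently $\ga(m') = \pi(\ep(m')) = \pi(m') = \widebar{m}$. Applying the inverse $\si$ of $\ga$ from Corollary \ref{cor:295} yields $m' = \si(\widebar{m}) = m$.

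There is no real obstacle here — the statement is a direct reformulation of the bijectivity of $\ga$. All substantive content (the existence of a horizontal lift, and the bijection between horizontal elements and $\widebar{M}$) has already been established in Theorem \ref{thm:215} and Corollary \ref{cor:295}; Corollary \ref{cor:296} simply repackages this as a uniqueness-of-Cauchy-problem statement.
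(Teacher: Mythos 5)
Your proof is correct and follows exactly the route the paper intends: the paper states Corollary \ref{cor:296} without a separate proof precisely because it is the immediate unwinding of the bijectivity of $\ga$ in Corollary \ref{cor:295}, with existence given by $\ga \circ \si = \opn{id}_{\widebar{M}}$ and uniqueness by injectivity of $\ga$ on $M^{\mrm{hor}}$. Nothing to add.
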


\begin{cor} \label{cor:462}
There is an $A$-linear direct sum decomposition
$M = M^{\mrm{hor}} \oplus \b \cd M$.
\end{cor}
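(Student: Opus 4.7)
The plan is to parallel the argument used for the one-dimensional analogue, Corollary \ref{cor:460}(1), substituting Theorem \ref{thm:215} in place of Theorem \ref{thm:240}. Write $L := M^{\mrm{hor}}$ and $N := B \wh{\ot}_A L$, with its induced left $\DD_{B / A}$-module structure; Theorem \ref{thm:215} supplies a $\DD_{B / A}$-linear isomorphism $\th : N \iso M$. A direct sum decomposition of $M$ will be obtained by transporting one from $N$ along $\th$.

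First I would exhibit an $A$-linear splitting $N = L \oplus \b \cd N$. The $A$-module decomposition $B = A \oplus \b$, tensored with $L$ over $A$, gives $B \ot_A L = L \oplus (\b \ot_A L)$, where $L$ is embedded via $l \mapsto 1 \ot l$. Since $\b$ is finitely generated, Proposition \ref{prop:421}(2) applied to the $B$-module $B \ot_A L$ identifies $\b \cd N$ inside $N$ with $\opn{Ker}(N \to (B \ot_A L) / \b \cd (B \ot_A L)) = \opn{Ker}(N \to L)$. The composition $L \to B \ot_A L \to N$ is a section of this projection, so the short exact sequence $0 \to \b \cd N \to N \to L \to 0$ splits as $A$-modules, yielding $N = L \oplus \b \cd N$.

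Finally, I would apply the $B$-linear bijection $\th$. Since $\th$ is $B$-linear, $\th(\b \cd N) = \b \cd M$; and since its restriction to $1 \ot M^{\mrm{hor}} \sub N$ is the inclusion $M^{\mrm{hor}} \inj M$ (by the very construction of $\th$), we have $\th(L) = M^{\mrm{hor}}$. Transporting the decomposition of $N$ then yields $M = M^{\mrm{hor}} \oplus \b \cd M$. There is no substantive obstacle: once Theorem \ref{thm:215} is in hand, the corollary is a formal consequence of the decomposition $B = A \oplus \b$ together with the basic completion result Proposition \ref{prop:421}.
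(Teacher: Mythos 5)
Your argument is correct and follows essentially the same route as the paper, whose proof simply repeats that of Corollary \ref{cor:460}(1): decompose $N = B \,\wh{\ot}_A\, M^{\mrm{hor}}$ using $B = A \oplus \b$ and transport the splitting along the isomorphism $\th$ from Theorem \ref{thm:215}. Your extra step of identifying $\b \cd N$ with $\opn{Ker}(N \to L)$ via Proposition \ref{prop:421}(2) is just a more explicit justification of the same decomposition.
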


\begin{proof}
The same as the proof of Corollary \ref{cor:460}(1).
\end{proof}

A $B$-module $M$ is called {\em $\b$-adically free} if $M$ is the $\b$-adic
completion of a free $B$-module.
It is known that a $\b$-adically free $B$-module $M$ is isomorphic to the
module of $\b$-adically decaying functions
$\opn{F}_{\mrm{dec}}(Z, B)$ for some set $Z$; see \cite[Section 2]{Ye3}.

\begin{cor} \label{cor:452}
If $M / \b \cd M$ is a free $A$-module,
then $M$ is a $\b$-adically free $B$-module.
\end{cor}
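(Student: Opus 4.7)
The plan is to combine Theorem \ref{thm:215} with Corollary \ref{cor:295} in a completely straightforward way; I expect no real obstacle here, since both results have already done the heavy lifting.

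First I would invoke Corollary \ref{cor:295}, which says that the canonical $A$-linear map $\ga : M^{\mrm{hor}} \to \widebar{M} = M / \b \cd M$ is bijective. By hypothesis $\widebar{M}$ is a free $A$-module, so I obtain a set $Z$ and an $A$-module isomorphism $M^{\mrm{hor}} \cong A^{(Z)}$, where $A^{(Z)}$ denotes the free $A$-module on the set $Z$.

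Next I would plug this into Theorem \ref{thm:215}, which provides a $\DD_{B/A}$-module isomorphism $B \wh{\ot}_A M^{\mrm{hor}} \iso M$. Substituting, this becomes $B \wh{\ot}_A A^{(Z)} \iso M$ as $B$-modules. But $B \ot_A A^{(Z)}$ is canonically isomorphic to $B^{(Z)}$, the free $B$-module on $Z$, so $B \wh{\ot}_A A^{(Z)}$ is by definition the $\b$-adic completion of the free $B$-module $B^{(Z)}$.

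Hence $M$ is isomorphic, as a $B$-module, to the $\b$-adic completion of a free $B$-module, which is precisely the definition of $\b$-adically free. This completes the plan; the whole argument amounts to chaining together the two previously established results, so there is essentially no obstacle to overcome.
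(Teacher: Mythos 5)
Your proposal is correct and follows essentially the same route as the paper: identify $M^{\mrm{hor}}$ with the free module $M/\b\cd M$ via Corollary \ref{cor:295}, then use the isomorphism $B\,\wh{\ot}_A\, M^{\mrm{hor}} \iso M$ of Theorem \ref{thm:215} together with the observation that $B \ot_A M^{\mrm{hor}}$ is a free $B$-module, whose $\b$-adic completion is by definition $\b$-adically free. No gap; the argument matches the paper's proof of Corollary \ref{cor:452}.
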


\begin{proof}
The $A$-module $M^{\mrm{hor}} \cong M / \b \cd M$, so it is free. Then the
$B$-module $B \ot_A M^{\mrm{hor}}$ is free, and its $\b$-adic completion
$B \msp \wh{\ot}_{A} \msp M^{\mrm{hor}}$
is $\b$-adically free.
\end{proof}

The following corollary is
\cite[Proposition 8.9]{Ka}, which is phrased in terms of connections.

\begin{cor} \label{cor:451}
Assume $A$ is a field and $M$ is a finitely generated $B$-module.
Then $M \cong B^{\oplus r}$ as $\DD_{B / A}$-modules, for some natural number
$r$.
\end{cor}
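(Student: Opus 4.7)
The plan is to reduce to Theorem \ref{thm:215} via Corollary \ref{cor:452}. The key point is that the hypotheses ``$A$ is a field'' and ``$M$ is finitely generated over $B$'' combine to put us exactly in the situation where Corollary \ref{cor:452} produces a free module of \emph{finite} rank.

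First I would verify that $M$ is $\b$-adically complete, so that Theorem \ref{thm:215} applies. Since $A$ is a field, $B = A[[t_1, \ldots, t_n]]$ is a noetherian local ring with maximal ideal $\b$, and $B$ is $\b$-adically complete by construction. Any finitely generated module over a noetherian $\b$-adically complete ring is itself $\b$-adically complete (standard commutative algebra). Hence the $B$-module $M$ is $\b$-adically complete and the hypotheses of Theorem \ref{thm:215} (and thus of Corollary \ref{cor:295} and Corollary \ref{cor:452}) are satisfied.

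Next, by Corollary \ref{cor:295} we have an $A$-linear bijection $M^{\mrm{hor}} \iso M / \b \cd M$. Because $M$ is finitely generated over $B$, the quotient $M / \b \cd M$ is finitely generated over $B / \b = A$, so it is a finite-dimensional $A$-vector space, say of dimension $r$. Hence $M^{\mrm{hor}}$ is a free $A$-module, and Corollary \ref{cor:452} now tells us that $M$ is $\b$-adically free, namely $M \cong B \ \wh{\ot}_A \ M^{\mrm{hor}}$ as $\DD_{B/A}$-modules via the map $\th$ of Theorem \ref{thm:215}. Choosing an $A$-basis of $M^{\mrm{hor}}$ gives an isomorphism $M^{\mrm{hor}} \cong A^{\oplus r}$.

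Finally, because $A^{\oplus r}$ is a finite free $A$-module, the ordinary tensor product $B \ot_A A^{\oplus r} \cong B^{\oplus r}$ is already finitely generated over the noetherian complete ring $B$ and hence $\b$-adically complete. Therefore the $\b$-adic completion $B \ \wh{\ot}_A \ A^{\oplus r}$ coincides with $B^{\oplus r}$, and chasing through the identifications the induced $\DD_{B/A}$-structure is the evident coordinatewise one (each $\pa_i$ acts on each factor of $B$ by the partial derivative on $B$). Composing these isomorphisms yields $M \cong B^{\oplus r}$ as $\DD_{B/A}$-modules. I do not expect a genuine obstacle here; the only mild point to be careful about is observing that the $\DD_{B/A}$-action on $B \ \wh{\ot}_A \ M^{\mrm{hor}}$ induced from $M^{\mrm{hor}}$ as a plain $A$-module coincides with the standard coordinatewise action on $B^{\oplus r}$, which is immediate from the construction of the induced structure given before Lemma \ref{lem:281}.
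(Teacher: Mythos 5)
Your proposal is correct and follows essentially the same route as the paper: since $A$ is a field, $M/\b \cd M \cong A^{\oplus r}$ is free of finite rank, Corollary \ref{cor:452} (via Theorem \ref{thm:215}) gives $M \cong B \ \wh{\ot}_A \ M^{\mrm{hor}}$, and $B^{\oplus r}$ is already $\b$-adically complete, so the completion is just $B^{\oplus r}$. Your extra checks (that finite generation over the complete noetherian ring $B$ gives $\b$-adic completeness of $M$, and that the induced $\DD_{B/A}$-structure on $B^{\oplus r}$ is the coordinatewise one) are correct minor elaborations of what the paper leaves implicit.
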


\begin{proof}
Here $M / \b \cd M \cong A^{\oplus r}$ for some $r$, and
$B^{\oplus r}$ is already complete.
\end{proof}

\begin{rem} \label{rem:305}
Suppose $\K$ is a field of characteristic $0$, $X$ is a smooth $\K$-scheme,
and $\MM$ is a sheaf of left $\DD_{X / \K}$-modules, which is coherent as an
$\OO_X$-module. According to \cite[Proposition 1.a]{Be} or
\cite[Theorem 1.4.10]{HTT}, $\MM$ is a locally free $\OO_X$-module. The direct
proof in \cite{HTT} is very easy. They consider a closed point $x \in X$, the
local ring $A :=  \OO_{X, x}$, and the $A$-module $M := \MM_x$. It suffices to
prove that $M$ is a free $A$-module. This is done by a calculation
almost identical to the the one in the proof of Theorem \ref{thm:240}.

It is possible to deduce the algebraic result above from the corresponding
complete result, namely Theorem \ref{thm:215}. Let $\what{A}$ be the completion
of the local ring $A$ at its maximal ideal, and let $\what{M}$ be the
completion of the module $M$. Since $A \to \what{A}$ is faithfully flat, it
suffices to prove that $\what{M}$ is a free $\what{A}$-module. The completed
module $\what{M}$ has a $\DD_{\what{A} / \K}$ structure.
The residue field $\K'$ of $A$ is a finite separable extension of $\K$, so it
can be lifted uniquely into $\what{A}$, and then
$\what{A} \cong \K'[[t_1, \ldots, t_n]]$.
Now Theorem \ref{thm:215} can be applied.

The reverse direction -- namely using \cite[Theorem 1.4.10]{HTT} to prove
Theorem \ref{thm:215} -- does not work, because we do not know a priori that
$\what{M}$ descends to some $A$-module $M$.

Furthermore, there are plenty of nonisomorphic $\DD_{A / \K}$-modules
that trivialize when passing to the completion $\what{A}$.
To see this explicitly, let's assume that $\K = \C$, $n = 1$ and
the rank of $M$ is $1$.
For every $\la \in \C$ there is a $\DD_{A / \C}$-module
$M_{\la}$, which is a free $A$-module of rank $1$ with basis $e_{\la}$, and
such that $\pa(e_{\la}) = \la \cd e_{\la}$.
When passing to the completion, the $\what{A}$-module
$\what{M}_{\la}$ is still free with basis $e_{\la}$, but now the ring
$\what{A}$ contains the invertible element
$u_{\la} := \opn{exp}(- \la \cd t)$, and the element
$u_{\la} \cd e_{\la} \in \what{M}_{\la}$ is a horizontal basis,
so that $\what{M}_{\la}$ is a trivial $\DD_{\wh{A} / \C}$-module.
\end{rem}

\appendix
\section{An Error in a Book by Bj\"{o}rk}
\label{appendix}

While studying the literature on $\DD$-modules, we looked at the proof of
Theorem 1.1.25 in the book \cite{Bj} by J.-E. Bj\"{o}rk. This theorem is an
analytic version of Corollary \ref{cor:414}, which is a special case of Theorem
\ref{thm:411}. While reading the proof, we discovered an error in it.
In this appendix we shall explain the error, and how it can be resolved.

Before doing so, we should address a question of etiquette. As customary, we
wanted to communicate our discovery to the author of the book, giving him the
possibility to address it. Unfortunately, Bj\"{o}rk passed away several
years ago. Therefore we communicated his student Rolf K\"allstr\"om, who is
working in this same area. He confirmed that there is indeed an
error, that this error had not been observed previously, and that we ought
to write about it.

Now to the mathematics. We prefer to use our notation, and to simplify the
situation as much as possible, so that the salient points will be clear. We
also want to treat two settings at once:
\begin{itemize}
\item[($*$)] The setting of our paper, specialized: $A = \K$ is a field
of characteristic $0$, and $B = \K[[t]]$. Convergence is for the
$\b$-adic topology, where $\b = (t) \sub B$.

\item[($**$)] The complex analytic setting that is treated in the book
\cite{Bj}. Here $A = \C$, the field of complex numbers with its standard norm,
and $B$ is the subring of $\C[[t]]$ consisting of convergent power series,
namely series $b = \sum_{k \geq 0} \la_k \cd t^k$ with coefficients
$\la_k \in \C$ satisfying the asymptotic condition
$\opn{limsup}_{k \to  \infty} (\abs{\la_k})^{1 / k}  < \infty$.
In other words, $B$ is the ring of germs of analytic functions in one variable
at the origin. It is a noetherian local ring, with maximal ideal $\b$
generated by $t$.
\end{itemize}
The ring of differential operators $\DD_{B / A}$ has the same structure in both
settings. As before we write $\pa := \smfrac{\partial}{\partial t}$.

Suppose $M$ is a left $\DD_{B / A}$-module that's finitely generated as a
$B$-module. We can assume that $M$ is a free $B$-module of rank $r$, with basis
$\bs{m}' = (m'_1, \ldots, m'_r)$; for this we can either rely on
\cite[Theorem  1.4.10]{HTT}, or on the first half of the proof of
\cite[Theorem 1.1.25]{Bj}, which is correct (and is similar to the proof in
\cite{HTT}). The issue is how to replace the basis $\bs{m}'$ with another basis
$\bs{m}$, which is horizontal, and is congruent to $\bs{m}'$ modulo the maximal
ideal $\b \sub B$.
In other words, we want to find a matrix
$\bs{g} = \bs{g}(t) \in \opn{GL}_{r}(B)$,
$\bs{g} \equiv \bs{1} \msp \opn{mod} \lsp t$,
such that
\begin{equation} \label{eqn:445}
\pa(\bs{g} \cd \bs{m}') = 0 .
\end{equation}
Then the basis $\bs{m} := \bs{g} \cd \bs{m}'$ of $M$ will be horizontal.

The difficulty only arises when $r \geq 2$, so let us assume this.
Bj\"{o}rk considers the matrix
$\bs{b} \in \opn{Mat}_{r}(B)$
such that $\pa(\bs{m}') = \bs{b} \cd \bs{m}'$.
The matrix $\bs{b}$ has a power series expansion
$\bs{b} = \sum_{k \geq 0} \msp t^k \cd \bs{a}_k$
with $\bs{a}_k \in \opn{Mat}_{r}(A)$.
Convergence of this series in the setting ($*$) is easy; we did not verify
convergence in the setting ($**$), but presumably this is automatic for
standard
reasons.

Next Bj\"{o}rk defines the matrix
\[ \bs{c} := \sum_{k \geq 0} \msp (k+1)^{-1} \cd t^{k + 1} \cd \bs{a}_k
\in \opn{Mat}_{r}(B) , \]
which satisfies $\pa(\bs{c}) = \bs{b}$.
Again, convergence is obvious in the adic setting ($*$), and
requires justification in the analytic setting ($**$).
Then he defines the matrix
\[ \bs{g} := \opn{exp}(- \bs{c}) =
\sum_{k \geq 0} \msp (k !)^{-1} \cd (- \bs{c})^k \in \opn{Mat}_{r}(B) . \]
Observe that this sum converges in the adic setting ($*$) because the
matrix $\bs{c}$ is divisible by $t$. In the complex analytic setting
($**$) convergence is because of the rapid decay of $(k !)^{-1}$.

The erroneous claim of Bj\"{o}rk (implicit in the text, yet present
in formula (2) there) is that the matrix
$\bs{g} \in \opn{Mat}_{r}(B)$ satisfies the differential equation
\begin{equation} \label{eqn:300}
\pa(\bs{g}) =^{\tup{false}} - \bs{g} \cd \bs{b} ,
\end{equation}
with initial condition
$\bs{g}(0) = \bs{1} \in \opn{Mat}_{r}(A)$. {\em For the differential equation
\tup{(\ref{eqn:300})} to hold it is sufficient, and very likely also necessary,
that the matrices $\bs{a}_k$ commute with each other}.

Anyhow, here is a counterexample: take the ring $A := \Q$, so $B = \Q[[t]]$, and
the rank $r := 2$. The matrix
$\bs{c} = \bs{c}(t) \in \opn{Mat}_{2}(B)$ is
$\bs{c} := t \cd \bs{a}_1 + t^2 \cd \bs{a}_2$, with
$\bs{a}_1 := \sbmat{0 & 1 \\ 0 & 0}$ and
$\bs{a}_2 := \sbmat{1 & 0 \\ 0 & 0}$.
Recall that $\bs{b} = \pa(\bs{c})$ and
$\bs{g} = \opn{exp}(- \bs{c})$.
A calculation shows that
$\pa(\bs{g}) \neq - \bs{g} \cd \bs{b}$.

Bj\"{o}rk proceeds as follows. He takes the sequence
$\bs{m} := \bs{g} \cd \bs{m}'$ of elements of $M$. Here is our
interpretation of Bj\"{o}rk's formula (2):
\begin{equation} \label{eqn:301}
\pa(\bs{m}) = \pa(\bs{g} \cd \bs{m}') =
\pa(\bs{g}) \cd \bs{m}' + \bs{g} \cd \pa(\bs{m}') =^{\tup{false}}
- \bs{g} \cd \bs{b} \cd \bs{m}' + \bs{g} \cd \bs{b} \cd \bs{m}' = 0 .
\end{equation}
The false equality $=^{\tup{false}}$ relies on the false formula
(\ref{eqn:300}).

In the setting ($**$) of complex analytic functions, the correct way
to solve the ODE (\ref{eqn:445}) is by appealing to the standard existence
results. Presumably this is what Deligne had in mind when he wrote
"Il est bien connu que..." regarding \cite[Theorem 2.17]{De}.
In \cite[section 4.2]{HTT}, when discussing analytic $\DD$-modules,
they refer to the Frobenius Theorem.

In our $t$-adic setting ($*$), Lemma \ref{lem:275} and its proof
give us an explicit formula for the matrix $\bs{g}$ solving the ODE
(\ref{eqn:445}). Given the basis
$\bs{m}' = (m'_1, \ldots, m'_r)$ of $M$, let
$m_i :=  \psi(m'_i) \in M$ and
$\bs{m} := (m_1, \ldots, m_r)$,
which is a horizontal basis of $M$. Then define
$\bs{g} \in \opn{GL}_{r}(B)$ to be the matrix such that
$\bs{g} \cd \bs{m}' = \bs{m}$. This matrix $\bs{g}$ will solve the ODE
(\ref{eqn:445}).

An alternative approach for obtaining the matrix $\bs{g}$ is by {\em
$1$-dimensional nonabelian multiplicative integration}, also known as {\em path
ordered exponential}.
This method would present $\bs{g}$ as the limit of {\em Riemann products}, cf.\
\cite[Chapter 3]{Ye5}.


\end{document}